\theoremstyle{plain}
\newtheorem{theorem}{Theorem}
\newtheorem{corollary}{Corollary}
\newtheorem{lemma}{Lemma}
\newtheorem{proposition}{Proposition}
\newtheorem{algorithm}{Algorithm}
\newtheorem{conjecture}{Conjecture}
\theoremstyle{definition}
\newtheorem{definition}{Definition}
\theoremstyle{remark}
\newcommand {\Q}{{\mathbb{Q}}}
\newcommand {\C}{{\mathbb{C}}}
\newcommand {\R}{{\mathbb{R}}}
\newcommand {\N}{{\mathbb{N}}}
\newcommand {\PP}{\mathbb{P}}
\newcommand {\K}{{\mathcal{K}}}
\newcommand {\OO}{{\mathcal{O}}}
\newcommand {\idc}{{\mathfrak{c}}}
\newcommand {\idf}{{\mathfrak{f}}}
\newcommand {\idp}{{\mathfrak{p}}}
\newcommand {\idm}{{\mathfrak{m}}}
\newcommand{\Norm}        {{\mathcal N}}
\newcommand{\Gal}      {\mathop{\rm {Gal}}}
\newcommand{\Cl}      {\mathop{\rm {Cl}}\nolimits}
\newcommand{\ind}        {{\mathop{\rm ind}}}
\newcommand{\res}        {{\mathop{\rm res}}}
\newcommand{\rk}        {{\mathop{\rm rk}}}
\newcommand{\eps}{\epsilon}
\begin{document}

\title[Distribution of Number Fields with Wreath Products as Galois Groups]
{The Distribution of Number Fields with Wreath Products as Galois Groups}

\author{J\"urgen Kl\"uners}
\email{klueners@math.uni-paderborn.de}
\address{Universit\"at Paderborn, Institut f\"ur Mathematik, D-33095 Paderborn, Germany.}

\subjclass{Primary 11R29; Secondary 11R16, 11R32}

\begin{abstract} 
  Let $G$ be a wreath product of the form $C_2 \wr H$, where $C_2$ is
  the cyclic group of order 2. Under mild conditions for $H$ we
  determine the asymptotic behavior of the counting functions for
  number fields $K/k$ with Galois group $G$ and bounded discriminant.
  Those counting functions grow linearly with the norm of the
  discriminant and this result coincides with a conjecture of Malle.
  Up to a constant factor these groups have the same asymptotic
  behavior as the conjectured one for symmetric groups.
\end{abstract}

\maketitle

\section{Introduction}
Let $k$ be a number field and $K=k(\alpha)$ be a finite extension of
degree $n$ with minimal polynomial $f$ of $\alpha$. By abuse of
notation we define $\Gal(K/k):=\Gal(f)$. This means that we associate
a Galois group even to a non-normal extension. Therefore the Galois group of
$K/k$ is a transitive permutation group $G\leq S_n$.  

Denote by $\Norm=\Norm_{k/\Q}$ the norm function. Let
$$Z(k,G;x):=\#\left\{K/k : \Gal(K/k)=G,\ \Norm(d_{K/k})\le
  x\right\}$$ be the number of field extensions of $k$ (inside a fixed
algebraic closure $\bar\Q$) of relative degree~$n$ with Galois group
permutation isomorphic to $G$ and norm of the
discriminant $d_{K/k}$ bounded above by $x$. It is well known that the
number of extensions of $k$ with bounded norm of the discriminant is
finite, hence $Z(k,G;x)$ is finite for all $G$, $k$ and $x\in\R$. We
are interested in the asymptotic behavior of this function for
$x\rightarrow\infty$. Gunter Malle \cite{Ma4,Ma5} has given a precise
conjecture how this asymptotics should look like. Before we can state
it we need to introduce some group theoretic definitions.
\begin{definition}
  Let $1\ne G\leq S_n$ be a transitive subgroup acting on $\Omega=\{1,\ldots,n\}$.
  \begin{enumerate}
  \item For $g\in G$ we define the index $\ind(g):= n- \mbox{ the number of orbits of $g$ on }\Omega.$
  \item $\ind(G):=\min\{\ind(g): 1\ne g\in G\}.$
  \item $a(G):=\ind(G)^{-1}$.
  \item Let $C$ be a conjugacy class of $G$ and $g\in C$. Then $\ind(C):=\ind(g)$.
  \end{enumerate}
\end{definition} 
The last definition is independent of the choice of $g$ since all
elements in a conjugacy class have the same cycle shape. We define an
action of the absolute Galois group of $k$ on the
$\bar{\Q}$-characters of $G$.  The orbits under this action are called
$k$--conjugacy classes. Note that we get the ordinary conjugacy
classes when $k$ contains all $N$-th roots of unity for $N=|G|$.
\begin{definition}
  For a number field $k$ and a transitive subgroup $1\ne G\leq S_n$ we define:
  $$b(k,G):=\#\{C : C\; k\mbox{-conjugacy class of minimal index }\ind(G)\}.$$
\end{definition}
Now we can state the conjecture of Malle \cite{Ma5}, where we write $f(x) \sim g(x)$ for 
$\lim\limits_{x\rightarrow\infty} \frac{f(x)}{g(x)} =1$.

\begin{conjecture}\label{con}(Malle)
  For all number fields $k$ and all transitive permutation groups
  $1\ne G\leq S_n$ there exists a constant $c(k,G)>0$ such that
  $$Z(k,G;x) \sim c(k,G)x^{a(G)} \log(x)^{b(k,G)-1},$$
  where $a(G)$ and $b(k,G)$ are given as above.  
\end{conjecture}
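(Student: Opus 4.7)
The statement is Malle's conjecture in full generality, which remains open; the realistic target---matching this paper's title and abstract---is to establish it for $G = C_2 \wr H$ under mild hypotheses on $H$. My plan is to exploit the wreath structure to reduce counting $G$-extensions of $k$ to counting quadratic extensions of $H$-extensions of $k$.

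First, I would use the group-theoretic structure of $C_2\wr H$. A degree-$2n$ extension $K/k$ with $\Gal(K/k)\cong C_2\wr H$ (as permutation groups) contains a unique intermediate field $F/k$ of degree $n$ with $\Gal(F/k)\cong H$, namely the fixed field of $C_2^n\rtimes H_1$ (where $H_1$ is a point stabilizer in $H$); the extension $K/F$ is quadratic. Conversely, a ``generic'' quadratic extension $K/F$ has Galois closure over $k$ with group exactly $C_2\wr H$. The tower conductor--discriminant formula gives
$$\Norm_{k/\Q}(d_{K/k}) \;=\; \Norm_{F/\Q}(d_{K/F})\cdot \Norm_{k/\Q}(d_{F/k})^{2},$$
decoupling the discriminant bound into ``base'' and ``quadratic'' contributions. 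The block-internal involution $(0,\dots,1,\dots,0;\,e)$ has index $1$ in the degree-$2n$ action, so $a(G)=1$; under mild hypotheses on $H$ it represents the unique conjugacy class of minimal index (stable under any Galois action on characters, since its elements have order $2$), so $b(k,G)=1$. The conjectured asymptotic is thus purely linear.

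Second, I would split the counting as
$$Z(k,G;x) \;=\; \sum_{\substack{F/k \\ \Gal(F/k)=H}} \#\Bigl\{K/F \text{ quadratic and ``good''},\; \Norm_{F/\Q}(d_{K/F})\le x/\Norm_{k/\Q}(d_{F/k})^{2}\Bigr\}.$$
For fixed $F$ the inner count is $\sim c_{F}\cdot x/\Norm_{k/\Q}(d_{F/k})^{2}$ by class field theory: the Dirichlet series $\Psi_{F}(s)=\sum_{K/F\text{ quad}}\Norm_{F/\Q}(d_{K/F})^{-s}$ has a simple pole at $s=1$ whose residue is controlled by the $2$-torsion of the narrow class group and of the unit group of $F$. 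Summing over $F$ yields a convergent series because $\sum_{F}\Norm_{k/\Q}(d_{F/k})^{-s}$ has abscissa of convergence $a(H)\le 1<2$, giving the conjectured constant
$$c(k,G) \;=\; \sum_{\substack{F/k \\ \Gal(F/k)=H}} \frac{c_{F}}{\Norm_{k/\Q}(d_{F/k})^{2}}.$$

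The main obstacle is uniformity: the inner asymptotic must come with an error term sufficiently uniform in $F$ to justify exchanging the sum over $F$ with the limit $x\to\infty$. I would package everything into a single Dirichlet series $\Phi(s)=\sum_{K}\Norm_{k/\Q}(d_{K/k})^{-s}$, factor it as $\sum_{F}\Norm_{k/\Q}(d_{F/k})^{-2s}\Psi_{F}(s)$, locate the rightmost pole at $s=1$ (simple, with residue $c(k,G)$), and apply a Tauberian theorem. Two secondary difficulties remain: a sieve removing the thin set of ``bad'' quadratic extensions where the Galois closure collapses to a proper subgroup of $C_2\wr H$, and verifying that the precise hypotheses on $H$ guarantee absolute convergence of the outer sum in a half-plane strictly to the right of $s=1$, so that the pole of $\Phi(s)$ is inherited cleanly from the local factors $\Psi_{F}(s)$.
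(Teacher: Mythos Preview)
Your plan matches the paper's approach almost exactly: factor the Dirichlet series as $\sum_{F}\Norm(d_{F/k})^{-2s}\Phi_{F,C_2}(s)$, establish a simple pole at $s=1$ with meromorphic continuation (the paper reaches $\Re(s)>7/8$ via convexity bounds for Hecke $L$-functions that are uniform in $d_F$, which is exactly the uniformity you flag), and apply a Tauberian theorem. For the sieve step the paper observes that any prime with $p\,\|\,\Norm(d_{L/k})$ produces a transposition in $\Gal(L/k)$ and hence forces the full wreath product, then bounds the towers lacking such a prime by $O(x^{3/4+\varepsilon})$ using ray-class-group estimates; note also that the ``mild hypothesis'' on $H$ is precisely $Z(k,H;x)=O_\varepsilon(x^{1+\varepsilon})$, not the stronger (and generally unknown) claim that the abscissa of convergence equals $a(H)$, and that $b(k,G)=1$ holds for $C_2\wr H$ unconditionally since all transpositions in a transitive group are conjugate.
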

We remark that at the time when the conjecture was stated it was only
known for all abelian groups and the groups $S_3\leq S_3$ and $D_4\leq S_4$.
Let us state some easy properties of the constants $a(G)$ and $b(k,G)$
which are already given in \cite{Ma4,Ma5}. It is easy to see that
$a(G)\leq 1$ and equality occurs if and only if $G$ contains a
transposition. It is an easy exercise (see Lemma
\ref{lem:transposition}) that all transpositions are conjugated
in a transitive permutation group. Therefore we obtain $b(k,G)=1$,
if $a(G)=1$. Since the symmetric group always contains a
transposition, Malle's conjecture implies that the counting function
$Z(k,n;x)$ for degree $n$ extensions with bounded discriminant as
above behaves like $c(n) x$. The latter conjecture is proven for
$n\leq 5$, see \cite{DaHe,Bh1,Bh2}, but nothing is known for $n\geq
6$.

One result of this paper is that for every even $n$ there exists a
group $G$ such that $Z(k,G;x) \sim c(k,G) x.$ This group $G$ will be a
wreath product of type $C_2 \wr H$, where $H\leq S_{n/2}$, see
Corollaries \ref{Cor1} and \ref{Cor2}. There are mild conditions for $H$,
but those are fulfilled if $H$ is nilpotent or regular for instance.

The main results will be Theorems \ref{Satz:kranz} and \ref{mainwreath}.
Let $H$ be a permutation group which fulfills the mild conditions of
Theorem \ref{Satz:kranz}. Then the counting function of $G:=C_2\wr H$ behaves
like 
$$Z(k,C_2\wr H;x) \sim c(k,G) x.$$
Furthermore, the corresponding Dirichlet series has a simple pole at 1
and has a meromorphic continuation to real part larger than $7/8$.

Note that in \cite{Kl5} we have given a counter example to 
Conjecture \ref{con}. In these counter examples it might happen that the
$\log$-factor is bigger than expected when certain subfields of
cyclotomic extensions occur as intermediate fields.  Nevertheless, the
main philosophy of this conjecture should be true.

\section{Zeta functions, Hecke $L$--series, and ray class groups}
\label{sec:hecke}

In this section we collect some properties about Hecke $L$--series.
For a number field $k$ we denote by $\PP(k)$ the set of prime ideals
of the ring of integers $\OO_k$ of $k$. We denote by
$$\zeta_k(s) :=\prod_{\idp\in\PP(k)}\left(1-\frac{1}{\Norm(\idp)^s}\right)^{-1},\;\;\Re(s)>1$$
the Dedekind zeta function of $k$ which converges absolutely and locally uniformly for
$\Re(s)>1$. This function has a simple pole at $s=1$ and we get the following estimates.

\begin{lemma}\label{residuum}
  Let $k$ be a number field of degree $m$ with absolute discriminant
  $d_k$. Then:
  \begin{enumerate}
  \item $|\zeta_k(s)|\leq \zeta_\Q(\Re(s))^m$ for all $s$ with $\Re(s)>1$.
  \item   For all $0<\eps\leq 1$:
  $$\res_{s=1} \zeta_k(s)\leq 2^{1+m} (d_k\pi^{-m/2})^\eps \eps^{1-m} \leq 2^{1+m}d_k^\eps \eps^{1-m}.$$
  \end{enumerate}
\end{lemma}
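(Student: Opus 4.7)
For part~(1), the natural approach is to expand $\zeta_k$ as an Euler product and group the local factors by the rational prime $p$ lying below each prime ideal $\idp$ of $\OO_k$. Writing $\idp_1,\ldots,\idp_{g_p}$ for the primes of $\OO_k$ above $p$, with ramification indices $e_i$ and residue degrees $f_i$, the fundamental identity $\sum_{i=1}^{g_p}e_i f_i = m$ forces $g_p\le m$ and $f_i\ge 1$. Using $|1-z|^{-1}\le(1-|z|)^{-1}$ for $|z|<1$, we obtain
$$\prod_{i=1}^{g_p}\bigl|1-\Norm(\idp_i)^{-s}\bigr|^{-1}\;\le\;\prod_{i=1}^{g_p}\bigl(1-p^{-f_i\Re(s)}\bigr)^{-1}\;\le\;\bigl(1-p^{-\Re(s)}\bigr)^{-m},$$
and taking the product over all rational primes $p$ yields (1). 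This step is routine and uses no input beyond the Euler product and the fundamental identity.

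For part~(2) the discriminant must enter the bound, so part~(1) alone cannot suffice; one has to combine it with the functional equation. I would work with the completed Dedekind zeta function
$$\Lambda_k(s)\;:=\;|d_k|^{s/2}\bigl(\pi^{-s/2}\Gamma(s/2)\bigr)^{r_1}\bigl((2\pi)^{-s}\Gamma(s)\bigr)^{r_2}\zeta_k(s),$$
where $r_1,r_2$ denote the numbers of real and complex places of $k$, so that $r_1+2r_2=m$; recall that $\Lambda_k$ is meromorphic with simple poles only at $s=0,1$ and satisfies $\Lambda_k(s)=\Lambda_k(1-s)$. A contour-shift argument applied to $(s-1)\Lambda_k(s)$ on a rectangle around $s=1$, using the functional equation to trade the left-hand vertical side $\Re(s)=-\eps$ for a shifted copy of the right-hand side, represents $\rho:=\res_{s=1}\Lambda_k(s)$ as a vertical-line integral on $\Re(s)=1+\eps$. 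On that line part~(1) gives $|\zeta_k(1+\eps+it)|\le\zeta_\Q(1+\eps)^m\le(1+1/\eps)^m\le(2/\eps)^m$, and the archimedean factors of $\Lambda_k$ are controlled by Stirling. Converting back from $\Lambda_k$ to $\zeta_k$ by dividing through by $|d_k|^{1/2}(2\pi)^{-r_2}$ cancels one power of $|d_k|^{1/2}$ against the $|d_k|^{(1+\eps)/2}$ that was pulled out of $\Lambda_k$, leaving precisely $|d_k|^\eps$; the factor $\pi^{-m\eps/2}$ emerges from the $\pi^{-r_1 s/2}(2\pi)^{-r_2 s}$ prefactors after using $r_1+2r_2=m$.

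The main obstacle is the careful bookkeeping needed to collapse the Gamma and $\pi$ factors into the clean numerical constant $2^{1+m}$ and the stated $\eps$-dependence $\eps^{1-m}$. One must also verify that the horizontal sides of the contour contribute negligibly in the limit as their height tends to infinity: this follows from Stirling's estimate for the archimedean factors combined with the convexity bound for $\zeta_k$ in vertical strips, and if desired can be made fully rigorous by inserting a Gaussian smoothing factor $e^{-\delta s^2}$ and then letting $\delta\to 0$ at the end. A more arithmetic alternative that avoids the contour shift would be to invoke the analytic class-number formula $\res_{s=1}\zeta_k(s)=2^{r_1}(2\pi)^{r_2}h_k R_k/(w_k\sqrt{|d_k|})$ and quote a Minkowski-style upper bound on $h_k R_k/w_k$, but the analytic substance is identical.
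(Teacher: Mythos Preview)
The paper does not actually prove this lemma: it simply cites two corollaries from Narkiewicz's textbook (pp.~326 and~332) for parts~(1) and~(2) respectively. So there is no in-paper argument to compare your proposal against.

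Your treatment of part~(1) is correct and is precisely the standard proof: group the Euler factors of $\zeta_k$ by the rational prime below, use $|1-z|^{-1}\le(1-|z|)^{-1}$, the bound $g_p\le m$, and $f_i\ge 1$ to dominate each local block by $(1-p^{-\Re(s)})^{-m}$. This is exactly what one finds in Narkiewicz.

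Your sketch for part~(2) is also along the standard lines---work with the completed $\Lambda_k$, use the functional equation to fold a contour, bound $\zeta_k$ on $\Re(s)=1+\eps$ via part~(1), and track the archimedean factors by Stirling. You are candid that the bookkeeping producing the exact constants $2^{1+m}$ and $\eps^{1-m}$ is not carried out; this is the only real work, and it is indeed somewhat delicate (one has to be a bit careful with the $r_1,r_2$ split in the Gamma factors). Since the paper itself defers entirely to the reference, your proposal already goes further than the paper does, and the strategy you outline is the one Narkiewicz uses. The alternative you mention via the class-number formula and a Minkowski-type bound on $h_kR_k$ is also viable and appears in the literature, but as you note the analytic content is the same.
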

\begin{proof}
  The first assertion is Corollary 3 in  \cite[p. 326]{Nar}. The second
  one  is Corollary 3 in \cite[p. 332]{Nar}.
\end{proof}
For an ideal $\idc\subseteq \OO_k$ we consider a character $\chi$ of the ray class group
$\Cl_\idc$, i.e. a homomorphism from $\Cl_\idc$ to $\C^*$. This character is only defined for
ideals coprime to $\idc$. Let $S:=\{\idp\in \PP(k): \idp \mid \idc\}$ be the exceptional set. For
$\idp\in S$ we define $\chi(\idp)=0$. Therefore we multiplicatively extend this character to
all ideals. Now we are able to define the Hecke $L$--series:
$$L_k(\chi,s):=\prod_{\idp\in\PP(k)}\left(1-\frac{\chi(\idp)}{\Norm(\idp)^s} \right)^{-1}.$$
As the Dedekind zeta function this product converges absolutely and locally uniformly for
$\Re(s)>1$. For further properties we refer the reader to \cite[p. 343]{Nar}. 

The Hecke $L$--series have a meromorphic continuation to the left. In the 
following we need upper estimates for
 $L_k(\chi,s)$ in strips of the form $a<\Re(s)\leq 1$. The following theorem follows
directly from
\cite[equation 5.20]{IwKo}. The proof is similar to the proof of Theorem 7.4. in
\cite[p. 350]{Nar}, where we need to apply the convexity principle
 \cite[p. 265]{Lang}.
\begin{theorem}\label{bound_heckeL}
  Let $k$ be a number field of degree $m$, $\idf$ be an ideal of $\OO_k$, $\chi$ be an character of the 
  ray class group $\Cl_\idf$, and
  $D:=d_k\Norm(\idf)$. Define $\delta:=1$ if $\chi$ is the trivial character 
  and $\delta:=0$ otherwise. Then for all $\eps>0$ and all $s$ with
  $0\leq\sigma:=\Re(s)\leq 1$ we get the following estimate:
  $$|(s-1)^\delta L_k(s,\chi)| \leq c(\eps,m)(D|1+s|^m)^{(1-\sigma)/2+\eps}.$$
\end{theorem}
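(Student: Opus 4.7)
The plan is to invoke the Phragm\'en--Lindel\"of convexity principle on the strip $-\eps \leq \Re(s) \leq 1+\eps$, applied to the entire function $F(s) := (s-1)^\delta L_k(s,\chi)$, following the template of Theorem 7.4 of \cite[p.~350]{Nar}. First I would handle the right edge $\Re(s) = 1+\eps$: on this line the Euler product for $L_k(s,\chi)$ converges absolutely and satisfies $|L_k(s,\chi)| \leq \zeta_k(1+\eps)$, which by Lemma \ref{residuum}(1) is bounded by $\zeta_\Q(1+\eps)^m$. The extra $(s-1)^\delta$ factor costs at most $(1+|s|)^\delta$, so I obtain $|F(s)| \leq c_1(\eps,m)(1+|s|)^\delta$ on $\Re(s)=1+\eps$.

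Next I would use the functional equation of the Hecke $L$-series. The completed $L$-function $\Lambda_k(s,\chi) = A^{s/2}\Gamma_\chi(s)L_k(s,\chi)$, where $A$ is essentially $D$ and $\Gamma_\chi$ is a product of $m$ archimedean gamma factors (one per real and complex place, shifted according to the infinity type of $\chi$), satisfies $\Lambda_k(s,\chi) = W(\chi)\Lambda_k(1-s,\bar\chi)$ with $|W(\chi)|=1$. Thus on $\Re(s) = -\eps$ one has $\Re(1-s) = 1+\eps$, where the previous step applies to $L_k(1-s,\bar\chi)$, while Stirling's formula on the ratio $\Gamma_\chi(1-s)/\Gamma_\chi(s)$ produces polynomial growth $(1+|s|)^{m(1/2+\eps)}$ and the discriminant factor contributes $D^{1/2+\eps}$. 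Collecting, $|F(s)| \leq c_2(\eps,m) D^{1/2+\eps}(1+|s|)^{m(1/2+\eps)+\delta}$ on $\Re(s) = -\eps$.

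The function $F(s)$ is entire---the potential pole of $L_k(s,\chi)$ at $s=1$ when $\chi$ is trivial is cancelled by $(s-1)^\delta$---and is of finite order on the strip, so the convexity principle \cite[p.~265]{Lang} interpolates the two boundary bounds: with $t := (1+\eps-\sigma)/(1+2\eps)$ I get
$$|F(s)| \leq c_3(\eps,m)\bigl(D(1+|s|)^m\bigr)^{(1/2+\eps)t}(1+|s|)^\delta.$$
For $0 \leq \sigma \leq 1$ the exponent $t$ equals $(1-\sigma)/2 + O(\eps)$, and the stray $(1+|s|)^\delta$ factor can be absorbed into $(1+|s|)^{m\eps'}$ after slightly enlarging $\eps$, yielding the stated estimate. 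The main obstacle will be the careful bookkeeping of the gamma factors through Stirling's formula---in particular keeping track of the phases and the real parts of their arguments under $s \mapsto 1-s$---so that the $|1+s|^m$ dependence in the conclusion comes out cleanly. An alternative shortcut, which the paper in fact takes, is to invoke equation~(5.20) of \cite{IwKo} directly, since it packages precisely this convexity bound with the right normalization.
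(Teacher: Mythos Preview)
Your proposal is correct and follows precisely the approach the paper indicates: the paper does not give a detailed proof but simply states that the theorem follows directly from equation~(5.20) of \cite{IwKo}, with the proof being ``similar to the proof of Theorem~7.4 in \cite[p.~350]{Nar}'' via the convexity principle of \cite[p.~265]{Lang}. You have spelled out exactly that convexity argument (bounds on $\Re(s)=1+\eps$ from absolute convergence, on $\Re(s)=-\eps$ from the functional equation and Stirling, then Phragm\'en--Lindel\"of interpolation), and you even note the shortcut via \cite{IwKo} that the paper actually takes.
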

We can prove the following corollary.
\begin{corollary}\label{korbound}
  With the same notations as in Theorem \ref{bound_heckeL} we get for all $\eps>0$:
  $$|L_k(s,\chi)-\frac{R(\chi)}{s-1}| \leq c(\eps,m) (D|1+s|^m)^{(1-\sigma)/2+\eps},$$
  where $R(\chi)$ denotes the residue of $L_k(s,\chi)$  at $s=1$.
  We define $R(\chi)=0$, if $\chi$  is not the trivial character.
\end{corollary}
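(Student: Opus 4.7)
The plan is to split into cases according to whether $\chi$ is trivial, and in the trivial case to convert the bound of Theorem \ref{bound_heckeL} on $(s-1)L_k(s,\chi)$ into the desired bound on the pole-removed quantity. If $\chi$ is nontrivial, then $\delta=0$ in Theorem \ref{bound_heckeL} and, by the stated convention, $R(\chi)=0$, so the corollary reduces verbatim to the theorem.

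For trivial $\chi$ I would set $\phi(s) := (s-1)L_k(s,\chi) - R(\chi)$, which is entire and vanishes at $s=1$; the object to bound then equals $\phi(s)/(s-1)$. Two ingredients combine to control $|\phi|$. Theorem \ref{bound_heckeL} gives $|(s-1)L_k(s,\chi)| \leq c(\eps,m)(D|1+s|^m)^{(1-\sigma)/2+\eps}$ directly, while for $|R(\chi)|$ I would use that the trivial character on $\Cl_\idf$ simply strips the Euler factors at $\idp\mid\idf$ from $\zeta_k$, giving $R(\chi) = \res_{s=1}\zeta_k(s)\cdot\prod_{\idp\mid\idf}(1-\Norm(\idp)^{-1})$, which is at most $\res_{s=1}\zeta_k(s) \leq c(\eps,m) d_k^\eps$ by Lemma \ref{residuum}. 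Since $D \geq 1$ and $|1+s|\geq 1$ whenever $\sigma \geq 0$, both $|R(\chi)|$ and $|(s-1)L_k(s,\chi)|$ fit under $c(\eps,m)(D|1+s|^m)^{(1-\sigma)/2+\eps}$, yielding the same bound for $|\phi(s)|$.

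It remains to divide by $s-1$. The case $|s-1|\geq 1$ is immediate. For $|s-1|<1$ the quotient $\phi(s)/(s-1)$ is holomorphic on a neighborhood of $s=1$, and I would bound it by a Cauchy integral / Schwarz-lemma argument on a fixed-radius disc centered at $s=1$: since $\phi$ vanishes at $1$, $|\phi(s)/(s-1)|$ is controlled by $R^{-1}\sup_{|w-1|=R}|\phi(w)|$, and the sup on the left half of the boundary circle (where $\Re(w)\leq 1$) already falls under the bound on $|\phi|$ derived above. The main technical obstacle is obtaining a uniform bound on $\phi$ along the arc of this circle that lies in $\Re(w)>1$, where Theorem \ref{bound_heckeL} does not directly apply. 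There one must exploit that the function $(w-1)L_k(w,\chi)$, although containing a factor of $L_k$ whose Dirichlet-series estimate $\zeta_\Q(\Re w)^m$ blows up as $\Re w\to 1^+$, nevertheless remains holomorphic at $w=1$ with value $R(\chi)$, and combine this with absolute convergence of the Euler product further to the right and a convexity/Phragmén--Lindelöf interpolation as in \cite[p. 265]{Lang} to obtain the required uniform bound.
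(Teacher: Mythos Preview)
Your argument is essentially the paper's: the same case split on $\chi$, the same use of the product formula and Lemma~\ref{residuum} to bound $|R(\chi)|$, the triangle inequality to control $\phi(s)=(s-1)L_k(s,\chi)-R(\chi)$, and then an appeal to analyticity near $s=1$ to pass from $\phi$ to $\phi/(s-1)$. Where the paper writes in one line ``we get the wanted estimate for small $|s-1|$ using the maximum principle,'' you unpack this into a Schwarz-type bound on a small disc and correctly flag that a $D$-uniform estimate for $\phi$ on the arc in $\Re(w)>1$ is not directly supplied by Theorem~\ref{bound_heckeL}; your suggested Phragm\'en--Lindel\"of interpolation between $\sigma=1$ and a line of absolute convergence is a legitimate way to make that step rigorous.
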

\begin{proof}
  If $\chi$ is not trivial this is Theorem \ref{bound_heckeL}.
  For the trivial character $\chi$ with exceptional set $S$ we get:
  $$L_k(s,\chi)= \zeta_k(s)\prod_{\idp\in S} \left(1-\frac{1}{\Norm(\idp)^s}\right).$$
  Using Lemma \ref{residuum} we get for our residue:
  $$|R(\chi)|\leq \tilde c(\eps,m) d_k^\eps \mbox{ for all }\eps>0.$$
  Using Theorem  \ref{bound_heckeL} and by applying the triangular inequality we find
  a new constant $c(\eps,m)$ with
  $$(s-1)L_k(s,\chi)-R(\chi)\leq  c(\eps,m)
  (D|1+s|^m)^{(1-\sigma)/2+\eps}.$$
  Since $L_k(s,\chi)-R(\chi)/(s-1)$ is analytic in $s=1$,  we get the wanted estimate
  for small $|s-1|$ using the maximum principle.
\end{proof}

For our main results we need upper bounds for the number of cyclic
extensions of a number field $k$ which are at most ramified in a given
finite set $S$ of prime ideals. We refer the reader to
\cite[p.123-126]{Lang} for properties of ray class groups which we use
in the proof of the next theorem. In the following we denote by
$\rk_\ell(\Cl_k)$ the {$\ell$--rank} of the class group of $k$. We
remark that we need the following result only for $\ell=2$.
\begin{theorem}\label{upper_Zl_bound}
  Let $k$ be an algebraic number field of degree $m$ with $r_1$ real embeddings,
  $\ell$ be a prime number, $S$ be a finite set of prime ideals of
  $\OO_k$, and
  $$S_1:=\{\idp \in S\mid \ell \notin\idp \}.$$
  Define $$s:=\begin{cases} \rk_\ell(\Cl\nolimits_k)+|S_1| +2m& \ell>2\\
                      \rk_\ell(\Cl\nolimits_k)+|S_1|+2m +r_1& \ell=2
   \end{cases}.
$$
Then there exist at most $\frac{\ell^s-1}{\ell-1}$
$C_\ell$--extensions of $k$ which are at most ramified in $S$.
\end{theorem}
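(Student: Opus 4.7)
The plan is to translate the counting problem into a rank bound via global class field theory, and then estimate the $\ell$-rank of the relevant ray class group through the standard exact sequence involving $\Cl_k$ and $(\OO_k/\idf)^*$.

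First, by global class field theory every $C_\ell$-extension $K/k$ unramified outside $S$ is abelian and corresponds, through the Artin map, to a surjective homomorphism $\Cl_\idf \twoheadrightarrow C_\ell$, where $\idf$ is the (finite part of the) conductor of $K/k$. I would fix once and for all a single modulus $\idf_0$ whose support is contained in $S$ (and which includes all real infinite places when $\ell=2$), with the exponent at each $\idp\mid\ell$ taken large enough (say $a_\idp=\lceil e_\idp \ell/(\ell-1)\rceil+1$) to dominate any possible wild conductor. Then every admissible extension arises from a surjection $\Cl_{\idf_0}\twoheadrightarrow C_\ell$. Two surjections determine the same field iff they differ by an automorphism of $C_\ell$, and $|\mathrm{Aut}(C_\ell)|=\ell-1$; since $C_\ell$ is simple, every nontrivial homomorphism to $C_\ell$ is automatically surjective. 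Hence the number of $C_\ell$-extensions unramified outside $S$ is bounded by
$$\frac{\ell^{\rk_\ell \Cl_{\idf_0}}-1}{\ell-1},$$
so the theorem reduces to proving $\rk_\ell \Cl_{\idf_0}\leq s$.

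For the rank bound, I would use the standard short exact sequence coming from the definition of the ray class group,
$$1\longrightarrow \bigl((\OO_k/\idf_0)^*\times\{\pm 1\}^{r_1}\bigr)\big/\operatorname{im}(\OO_k^*)\longrightarrow \Cl_{\idf_0}\longrightarrow \Cl_k\longrightarrow 1,$$
where the sign factor appears only when $\ell=2$ (for odd $\ell$ the infinite part of the modulus contributes nothing to the $\ell$-part). By subadditivity of $\ell$-ranks along short exact sequences, together with the trivial observation that passing from a group to a quotient only decreases the $\ell$-rank, I obtain
$$\rk_\ell \Cl_{\idf_0}\;\leq\; \rk_\ell \Cl_k+\rk_\ell(\OO_k/\idf_0)^*+\epsilon,$$
with $\epsilon=r_1$ if $\ell=2$ and $\epsilon=0$ otherwise.

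It remains to show $\rk_\ell(\OO_k/\idf_0)^*\leq |S_1|+2m$. By the Chinese Remainder Theorem one has $(\OO_k/\idf_0)^*\cong\prod_{\idp\in S}(\OO_{k_\idp}/\idp^{a_\idp})^*$, so I would split the contributions. For each tame prime $\idp\in S_1$ the group $(\OO_{k_\idp}/\idp^{a_\idp})^*$ is the direct product of a cyclic group of order $\No(\idp)-1$ and a pro-$p$ group for the residue characteristic $p\neq \ell$; hence its $\ell$-rank is at most $1$, yielding at most $|S_1|$ in total. For each wild prime $\idp\mid\ell$ a local calculation on the principal units $1+\idp$ (equivalently, the local Euler characteristic formula applied to $H^i(k_\idp,\mu_\ell)$, combined with the splitting $k_\idp^*=\pi^{\Z}\times\OO_{k_\idp}^*$) gives the bound
$$\rk_\ell(\OO_{k_\idp}/\idp^{a_\idp})^*\;\leq\; [k_\idp:\Q_\ell]+1,$$
and summing over $\idp\mid\ell$ in $S$ yields at most $\sum_{\idp\mid\ell}[k_\idp:\Q_\ell]+\#\{\idp\mid\ell\}\leq m+m=2m$. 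Adding the two contributions completes the bound and the proof. The only nonroutine step is this last local estimate at wildly ramified primes; once that is in hand, the rest is bookkeeping with the ray class sequence.
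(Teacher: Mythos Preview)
Your argument is correct and follows essentially the same route as the paper: both reduce to bounding $\rk_\ell$ of a suitable ray class group via the exact sequence with $\Cl_k$ and $(\OO_k/\idm_0)^*$, split the latter by CRT, and estimate the tame contribution as $|S_1|$ and the wild contribution as $\sum_{\idp\mid\ell}([k_\idp:\Q_\ell]+1)\le 2m$. The only cosmetic differences are that the paper takes $e_\idp=1$ at tame primes (you allow higher powers but correctly observe the extra factor is prime-to-$\ell$), and the paper cites a reference for the generator bound on $(1+\idp)/(1+\idp^{e_\idp})$ where you invoke the local unit structure directly.
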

\begin{proof}
  The idea of the proof is to choose $\idm$ in such a way that all $C_\ell$--extensions
  are subfields of the ray class field of $\idm$. The infinite places are only important
  when $\ell=2$. Each real infinite place may increase the $2$--rank by at most 1.
  In case $\ell=2$ we insert all real infinite places in $\idm_\infty$ and define
  $$\idm_0:=\prod_{\idp\in S} \idp^{e_\idp},$$
  where $e_\idp=1$ for $\idp\in S_1$. For $\idp\in S\setminus S_1$ we have wild ramification
  and the following estimates are valid for arbitrary $e_{\idp}>1$.
  In the following we compute upper bounds for the $\ell$--rank of 
  $(\OO_k/\idm_0)^*$. Using the chinese remainder theorem we get:
  $$(\OO_k/\idm_0)^* \cong \prod_{\idp\in S} (\OO_k/\idp^{e_\idp})^*
  \mbox{ for }\idm_0=\prod_{\idp\in S} \idp^{e_\idp}.$$ In case
  $e_\idp=1$ we get that $(\OO_k/\idp)^*$ is the multiplicative group
  of a finite field which is therefore cyclic. This explains the
  $|S_1|$-part in our formula.  In case $e_\idp>1$ we get
  $(\OO_k/\idp^{e_\idp})^* \cong (\OO_k/\idp)^* \times
  (1+\idp)/(1+\idp^{e_\idp})$. This case can only occur when $\idp$ is
  wildly ramified and therefore lies over $\ell$. In this case the
  order of the multiplicative group of the residue field is coprime to
  $\ell$. The second factor is an $\ell$--group which can be generated
  by at most $[k_\idp:\Q_\ell]+1$ elements (see e.g. \cite{HePaPo}).
  Since
  $$\sum_{\ell\in\idp} [k_\idp:\Q_\ell] = m$$ 
  we get the worst case when all prime ideals above $\ell$ are
  contained in $S$ and all corresponding completions have degree 1. In
  that case we can estimate the contribution of those prime ideals by
  $2m$.  The contribution of the unramified extensions to the
  $\ell$--rank is estimated by the $\ell$--rank of the class group.
\end{proof}
Unfortunately we do not know good estimates for the $\ell$--rank of
the class group.  The best thing we can do in general is to bound
$\ell^{\rk_\ell(\Cl_k)} \leq |\Cl_k|$.  The latter expression can be
bounded by the following (see \cite[p. 153]{Nar}).

\begin{theorem}\label{boundclass}
  For all $\eps>0$ and all $m\in\N$ there exist constants $c(m)$ and $c(m,\eps)$ such that
  for all number fields $k/\Q$ of degree  $m$ we have:
  $$|\Cl\nolimits_k| \leq c(m) d_k^{1/2}\log(d_k)^{m-1} \mbox{ and }$$
  $$|\Cl\nolimits_k| \leq c(m,\eps) d_k^{1/2+\eps}.$$  
\end{theorem}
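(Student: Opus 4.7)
The plan is to derive both bounds from the analytic class number formula
$$h_k R_k = \frac{w_k \sqrt{d_k}}{2^{r_1}(2\pi)^{r_2}}\,\res_{s=1}\zeta_k(s),$$
which expresses $h_k=|\Cl_k|$ in terms of the residue of the Dedekind zeta function, the regulator $R_k$, the number of roots of unity $w_k$, and the discriminant $d_k$. This reduces the task to upper-bounding $\res_{s=1}\zeta_k(s)$ and lower-bounding $R_k$ (and controlling the elementary factors $w_k$ and $2^{r_1}(2\pi)^{r_2}$ in terms of $m$ only).

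For the residue, I would apply Lemma \ref{residuum}(2), which gives
$$\res_{s=1}\zeta_k(s)\leq 2^{1+m}d_k^\eps\eps^{1-m}$$
for every $0<\eps\leq 1$. To obtain the logarithmic bound, I would optimize by taking $\eps=1/\log(d_k)$ (for $d_k\geq e$, while the case of small $d_k$ is trivial, as there are only finitely many fields up to any given discriminant bound by Hermite's theorem). This choice yields $\res_{s=1}\zeta_k(s)\leq 2^{1+m}e\,\log(d_k)^{m-1}$, i.e.\ an $O_m(\log(d_k)^{m-1})$ estimate. For the weaker second bound, one simply fixes any $\eps>0$ and absorbs $\eps^{1-m}$ into the constant $c(m,\eps)$, or equivalently absorbs the $\log$-factor from the first bound into $d_k^\eps$.

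For the regulator I would invoke a classical lower bound $R_k\geq c_1(m)>0$ depending only on the degree $m$. Such bounds go back to Remak and were refined by Friedman and Zimmert; the paper simply cites \cite{Nar} for them. The remaining factors are bounded uniformly in $m$: $w_k$ admits the trivial estimate $w_k\leq 2m$ (since $[\Q(\zeta_{w_k}):\Q]=\varphi(w_k)$ divides $m$), and $2^{r_1}(2\pi)^{r_2}\geq 2$. Putting everything together yields
$$h_k=\frac{w_k\sqrt{d_k}\,\res_{s=1}\zeta_k(s)}{2^{r_1}(2\pi)^{r_2}R_k}\leq c(m)\,d_k^{1/2}\log(d_k)^{m-1},$$
and analogously the $d_k^{1/2+\eps}$ bound.

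The main obstacle is the regulator lower bound: it is the one ingredient that cannot be extracted from the zeta-function estimates already developed in the excerpt, and it is genuinely deep. Once that ingredient is granted, the rest is bookkeeping with the analytic class number formula and a judicious choice of $\eps$ in Lemma \ref{residuum}.
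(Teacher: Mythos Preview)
The paper does not give its own proof of this theorem; it is simply quoted from \cite[p.~153]{Nar}. So there is nothing in the paper to compare your argument against. That said, your sketch is exactly the standard derivation (and is essentially what one finds in Narkiewicz): combine the analytic class number formula with an upper bound for $\res_{s=1}\zeta_k(s)$ and a lower bound for the regulator, and then optimise $\eps$ to trade $d_k^\eps$ for $\log(d_k)^{m-1}$.

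One small correction: your claim $w_k\leq 2m$ is false as stated. For $k=\Q(\sqrt{-3})$ one has $m=2$ but $w_k=6$. What is true, and all you need, is that $\varphi(w_k)\mid m$ forces $w_k\leq c(m)$ for some constant depending only on $m$ (since $\varphi^{-1}(\{1,\ldots,m\})$ is finite). This does not affect the structure of your argument.

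Your identification of the regulator lower bound as the only genuinely deep input is accurate; everything else is indeed bookkeeping once Lemma~\ref{residuum} is available.
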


\section{Quadratic extensions}
The asymptotics of quadratic extensions of a number field $k$ is well
studied and known. Let us define the following Dirichlet series
corresponding to $Z(k,C_2;x)$:
$$\Phi_{k,C_2}(s) := \sum_{[K:k]=2} \frac{1}{\Norm(d_{K/k})^s}=\sum_{N=1}^{\infty} \frac{a_N}{N^s}.$$
It is known that this Dirichlet series converges for $\Re(s)>1$. Here $a_N$ is the number of
quadratic extensions $K/k$ such that $\Norm(d_{K/k})=N$. This means that $a_N\geq 0$ for all
$N\in\N$. The following theorem is proved in \cite{CoDiOl2}:
\begin{theorem}[Cohen, Diaz y Diaz, Olivier]\label{phiZ2}
  Let $k$ be a number field with $i(k)$ complex embeddings. Then we get for $\Re(s)>1$:
  $$\Phi_{k,C_2}(s)= -1 +\frac{2^{-i(k)}}{\zeta_k(2s)}\sum_{\idc \mid 2\OO_k}
    \Norm(2\OO_k/\idc)^{1-2s}\sum_{\chi} L_k(s,\chi),$$
    where $\chi$ runs over the quadratic characters of the ray class group 
    $\Cl_{\idc^2}$ and $ L_k(s,\chi)$ is the Hecke $L$--series of $k$ corresponding to $\chi$.
\end{theorem}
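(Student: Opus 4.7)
The plan is to use class field theory to parametrize quadratic extensions by quadratic characters and then reorganize the resulting Dirichlet series according to the behavior at primes above $2$.

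Every quadratic extension $K/k$ corresponds bijectively to a nontrivial continuous quadratic character $\chi$ of the idele class group of $k$, and the conductor--discriminant formula in the quadratic case gives $d_{K/k}=\idf_0(\chi)$ (the finite part of the conductor). Therefore
$$\Phi_{k,C_2}(s) \;=\; \sum_{\chi\ne 1,\ \chi^2=1}\frac{1}{\Norm(\idf_0(\chi))^s}.$$
At odd primes $\idp$, a ramified quadratic character is tame, so the local conductor exponent is $0$ or $1$; at primes above $2$ the ramification is wild and the local conductor exponent takes several possible values bounded by the local ramification of $2$.

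The central reorganization parametrizes the $2$-adic data by an ideal $\idc\mid 2\OO_k$: the quadratic characters of $\Cl_{\idc^2}$ are exactly the quadratic characters of $k$ whose $2$-part of the conductor fits inside $\idc^2$. The weight $\Norm(2\OO_k/\idc)^{1-2s}$ in the statement is the local discriminant correction that records the gap between the actual $2$-part of $\idf_0(\chi)$ and the (possibly larger) modulus $\idc^2$; summing over $\idc\mid 2\OO_k$ with this weight recovers each quadratic extension exactly once with weight $\Norm(d_{K/k})^{-s}$. The additive $-1$ on the right-hand side corrects for the trivial character, which is included in the aggregate inner sum but corresponds to no genuine quadratic extension.

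With $\idc$ fixed, I would expand the inner $L$-functions and apply character orthogonality:
$$\sum_{\chi^2=1}L_k(s,\chi)\;=\;\sum_\ida \frac{1}{\Norm(\ida)^s}\sum_{\chi^2=1}\chi(\ida),$$
the inner character sum vanishing unless $\ida$ represents a square class in $\Cl_{\idc^2}$. Multiplying by $\zeta_k(2s)^{-1}=\sum_\ida \mu(\ida)\Norm(\ida)^{-2s}$ then performs a M\"obius inversion that removes non-squarefree contributions at odd primes, matching the tame-ramification structure identified above. The archimedean normalization $2^{-i(k)}$ arises because complex places contribute no nontrivial local quadratic character whereas real places each contribute one: comparing the number of quadratic characters of $\Cl_{\idc^2}$ with the number of distinct quadratic extensions they parametrize leaves exactly the factor $2^{-i(k)}$.

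The main obstacle is the local bookkeeping at primes above $2$: one must verify that the assignment $\chi\mapsto\idc$ together with the weight $\Norm(2\OO_k/\idc)^{1-2s}$ reproduces the actual $2$-part of $d_{K/k}$ for every quadratic extension, including the wildly but not maximally ramified cases where several choices of $\idc$ might appear to be eligible. Once this $2$-adic parametrization is pinned down, the odd-part analysis via orthogonality and the M\"obius inversion through $\zeta_k(2s)^{-1}$ are essentially formal manipulations of Dirichlet series that converge absolutely for $\Re(s)>1$.
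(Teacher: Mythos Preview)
The paper does not prove this theorem; it simply quotes it from Cohen--Diaz y Diaz--Olivier \cite{CoDiOl2} and uses it as a black box. There is thus no argument in the paper to compare your proposal against. That said, your outline is essentially the strategy of \cite{CoDiOl2}: parametrize quadratic extensions by quadratic Hecke characters via the conductor--discriminant formula, expand $\sum_\chi L_k(s,\chi)$ by orthogonality over $\Cl_{\idc^2}$, strip off non-squarefree contributions at odd primes via the factor $\zeta_k(2s)^{-1}$, and handle the wild primes above $2$ through the auxiliary sum over $\idc\mid 2\OO_k$. You also correctly locate the genuine work in the $2$-adic bookkeeping.

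Two of your heuristic explanations are imprecise and would have to be repaired if you actually carried this out. First, the factor $2^{-i(k)}$ is not a comparison between characters and extensions --- class field theory already makes that a bijection --- but arises when one computes the order of the $2$-torsion of $\Cl_{\idc^2}$ from the exact sequence relating the ray class group to $(\OO_k/\idc^2)^*$ and the global units; the number $i(k)$ of complex places enters through the rank of $\OO_k^*$. Second, a fixed character $\chi$ appears in the inner sum for \emph{every} $\idc\mid 2\OO_k$ whose square contains the $2$-part of the conductor of $\chi$, not just one, so the weight $\Norm(2\OO_k/\idc)^{1-2s}$ does not by itself ``record the gap'' for a single choice of $\idc$; the correct contribution $\Norm(d_{K/k})^{-s}$ only materializes after the full sum over $\idc$ is combined with the M\"obius-type cancellation coming from $\zeta_k(2s)^{-1}$. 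These are exactly the local computations that \cite{CoDiOl2} performs, so your plan succeeds once they are done, but the mechanisms are not quite the ones you describe.
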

Using a Tauberian theorem (see e.g. \cite[p. 121]{Nar2}) the following corollary is proved
in \cite{CoDiOl2}.
\begin{corollary}[Cohen, Diaz y Diaz, Olivier]\label{phiZ2res}
$$Z(k,C_2;x) \sim 2^{-i(k)}\frac{\res_{s=1}\zeta_k(s)}{\zeta_k(2)}x,$$
where $2^{-i(k)}\frac{\res_{s=1}\zeta_k(s)}{\zeta_k(2)}$ equals the  residue in $s=1$
of $\Phi_{k,C_2}$.
\end{corollary}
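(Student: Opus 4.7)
The plan is to read off the analytic behavior of $\Phi_{k,C_2}(s)$ from the formula in Theorem~\ref{phiZ2}, locate the pole at $s=1$, compute its residue, and invoke a standard Tauberian theorem; the coefficients $a_N$ are non-negative, so Wiener--Ikehara (or the Tauberian theorem cited in \cite[p.~121]{Nar2}) applies as soon as we know $\Phi_{k,C_2}(s)$ has a simple pole at $s=1$ and an appropriate continuation across $\Re(s)=1$.

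First I would isolate the polar part. In the formula
$$\Phi_{k,C_2}(s)= -1 +\frac{2^{-i(k)}}{\zeta_k(2s)}\sum_{\idc \mid 2\OO_k} \Norm(2\OO_k/\idc)^{1-2s}\sum_{\chi} L_k(s,\chi),$$
the factor $1/\zeta_k(2s)$ is holomorphic and non-vanishing at $s=1$, and among the Hecke $L$--series $L_k(s,\chi)$ only the trivial character at each ray class group $\Cl_{\idc^2}$ contributes a pole at $s=1$. For the trivial character $\chi_0^{(\idc)}$ one has
$$L_k(s,\chi_0^{(\idc)})=\zeta_k(s)\prod_{\idp\mid\idc}\bigl(1-\Norm(\idp)^{-s}\bigr),$$
so its residue at $s=1$ equals $\res_{s=1}\zeta_k(s)\cdot\prod_{\idp\mid\idc}(1-\Norm(\idp)^{-1})$. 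Corollary~\ref{korbound} then bounds all other terms uniformly in strips to the left of $\Re(s)=1$, so in particular they remain bounded as $s\to 1$.

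Next I would compute the residue. Pulling out $\res_{s=1}\zeta_k(s)/\zeta_k(2)$ yields
$$\res_{s=1}\Phi_{k,C_2}(s)= \frac{2^{-i(k)}\res_{s=1}\zeta_k(s)}{\zeta_k(2)}\sum_{\idc\mid 2\OO_k}\Norm(2\OO_k/\idc)^{-1}\prod_{\idp\mid\idc}\bigl(1-\Norm(\idp)^{-1}\bigr).$$
Writing $2\OO_k=\prod\idp_i^{e_i}$ the sum factors over the primes $\idp_i$, and the local factor at $\idp=\idp_i$ with $q:=\Norm(\idp)$ is
$$q^{-e}+(1-q^{-1})\sum_{f=1}^{e}q^{f-e}=q^{-e}\Bigl(1+(q^e-1)\Bigr)=1,$$
since $(1-q^{-1})\sum_{f=1}^{e}q^f=q^e-1$ telescopes. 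Hence the sum equals $1$ and the residue is $2^{-i(k)}\res_{s=1}\zeta_k(s)/\zeta_k(2)$, as claimed for the Dirichlet series.

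Finally, to pass from the residue to the asymptotic for $Z(k,C_2;x)$ I would apply a Tauberian theorem to $\Phi_{k,C_2}(s)=\sum a_N N^{-s}$. The coefficients $a_N$ are non-negative, and the formula together with Corollary~\ref{korbound} shows that $\Phi_{k,C_2}(s)-R/(s-1)$, with $R$ the residue computed above, extends continuously to $\Re(s)\ge 1$ (in fact, with polynomial growth in $|{\rm Im}(s)|$, which is more than the Wiener--Ikehara hypothesis requires). The Tauberian theorem of \cite[p.~121]{Nar2} then yields $Z(k,C_2;x)=\sum_{N\le x}a_N\sim R\,x$, which is precisely the statement. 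The only real work is the residue identity, which reduces to the clean local computation above; everything else is a direct application of the inputs already proved in the paper.
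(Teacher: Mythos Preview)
Your argument is correct and follows the same route the paper indicates, namely reading off the pole of $\Phi_{k,C_2}(s)$ from Theorem~\ref{phiZ2} and applying a Tauberian theorem; the paper itself does not give a proof but simply cites \cite{CoDiOl2}, and your explicit local computation showing that the divisor sum over $\idc\mid 2\OO_k$ collapses to~$1$ correctly supplies the residue identity the paper leaves to that reference.
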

Our Dirichlet series has a simple pole at $s=1$ and has a meromorphic
continuation to the left. The proof of the following theorem comes
from the properties of Hecke $L$--series.  The number of characters,
i.e. the number of summands can be bounded by the size of the ray
class group which can be bounded up to a constant term depending on
$[K:k]$ by the size of the class group of $k$. The latter one we bound by
$O_{\eps,m}(d_k^{1/2+\eps})$, where $m=[k:\Q]$. Altogether we get:
\begin{theorem}\label{bound_phi}
  $\Phi_{k,C_2}(s)$ has a meromorphic continuation for 
  $\Re(s)>1/2$. In this area it has only one pole at $s=1$ with residue
  $R(k)=\frac{2^{-i(k)}\res_{s=1}\zeta_k(s)}{\zeta_k(2)}$. Furthermore, the
  function $g_k(s):=\Phi_{k,C_2}(s)-\frac{R(k)}{s-1}$ is analytic for
  $\Re(s)>1/2$ and we get for all $\eps>0$ and $\Re(s)>1/2$:
  $$|g_k(s)| \leq c(\eps,m) (d_k|1+s|^{m})^{(1-\sigma)/2+\eps}d_k^{1/2}.$$
\end{theorem}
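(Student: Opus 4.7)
The plan is to work directly from Theorem~\ref{phiZ2}. For $\Re(s)>1/2$ the factor $1/\zeta_k(2s)$ is analytic (since $\zeta_k$ has no zeros on $\Re(s)>1$) and each Hecke $L$-series appearing in the formula has a meromorphic continuation to $\C$ by classical theory, with a pole only for the trivial characters $\chi_0^{(\idc)}$ of $\Cl_{\idc^2}$. This already yields the meromorphic continuation of $\Phi_{k,C_2}(s)$ to $\Re(s)>1/2$. To identify the residue I would use
$$L_k\bigl(s,\chi_0^{(\idc)}\bigr)=\zeta_k(s)\prod_{\idp\mid\idc}\bigl(1-\Norm(\idp)^{-s}\bigr)$$
together with the purely local identity at each prime above $2$
$$\sum_{\idc\mid 2\OO_k}\Norm(2\OO_k/\idc)^{-1}\prod_{\idp\mid\idc}\bigl(1-\Norm(\idp)^{-1}\bigr)=1,$$
which combined with the prefactor $2^{-i(k)}/\zeta_k(2)$ reproduces the residue $R(k)$ of Corollary~\ref{phiZ2res}.

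For the bound on $g_k(s)$ I would apply Corollary~\ref{korbound} to each Hecke $L$-series in the formula. The conductor $\idc^2$ divides $4\OO_k$, so $D=d_k\Norm(\idc^2)\le 4^m d_k$ and the factor $4^m$ can be absorbed into $c(\eps,m)$, giving
$$\bigl|L_k(s,\chi)-R(\chi)/(s-1)\bigr|\le c(\eps,m)\bigl(d_k|1+s|^m\bigr)^{(1-\sigma)/2+\eps}$$
for each quadratic character $\chi$ of $\Cl_{\idc^2}$. The number of such characters is at most $2^{\rk_2(\Cl_{\idc^2})}$; Theorem~\ref{upper_Zl_bound}, applied with $\ell=2$ and $S$ the set of primes above $2$ (so $|S_1|=0$), bounds this by $2^{\rk_2(\Cl_k)+2m+r_1}\le 2^{3m}|\Cl_k|$, and Theorem~\ref{boundclass} further bounds $|\Cl_k|\le c(m,\eps)d_k^{1/2+\eps}$. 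The number of divisors $\idc\mid 2\OO_k$ is bounded in terms of $m$ alone, on $\Re(s)\ge 1/2$ we have $|\Norm(2\OO_k/\idc)^{1-2s}|\le 1$, and on $\Re(s)\ge 1/2+\eps$ the factor $|1/\zeta_k(2s)|$ is bounded by a constant depending only on $m$ and $\eps$ via the Euler product and Lemma~\ref{residuum}(1).

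It remains to reconcile the aggregate pole contribution from the trivial characters with the single principal part $R(k)/(s-1)$. I would write their difference as
$$\frac{R(k)}{s-1}\cdot\frac{\zeta_k(2)H(s)-\zeta_k(2s)}{\zeta_k(2s)},\qquad H(s):=\sum_{\idc\mid 2\OO_k}\Norm(2\OO_k/\idc)^{1-2s}\prod_{\idp\mid\idc}\bigl(1-\Norm(\idp)^{-s}\bigr),$$
and use $H(1)=1$ to produce a simple zero at $s=1$ that cancels the apparent pole. The resulting analytic expression is bounded by combining Corollary~\ref{korbound} (applied to $\zeta_k(2s)$) with Lemma~\ref{residuum}. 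Assembling all pieces and choosing $\eps$ appropriately gives the claimed estimate.

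The main obstacle is the careful bookkeeping of the $d_k$- and $|s|$-dependence across the three sources of growth (number of characters, per-character Hecke $L$-series bound, and $1/\zeta_k(2s)$), so that the product ends up precisely in the shape $d_k^{1/2}(d_k|1+s|^m)^{(1-\sigma)/2+\eps}$ with a single $\eps$ in the final constant.
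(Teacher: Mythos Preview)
Your proposal is correct and follows essentially the same route as the paper. The paper's own proof is only the short paragraph preceding the theorem: it invokes the Hecke $L$--series bounds (Corollary~\ref{korbound}) for each summand in Theorem~\ref{phiZ2}, and controls the number of summands by bounding the ray class group in terms of $|\Cl_k|$ and then applying Theorem~\ref{boundclass} to get the factor $d_k^{1/2+\eps}$; you do the same, with the minor variation of passing through the $2$--rank via Theorem~\ref{upper_Zl_bound} rather than the full order of $\Cl_{\idc^2}$, and you spell out the treatment of $1/\zeta_k(2s)$ and of the residue matching, which the paper leaves implicit.
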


\section{Wreath products}
\label{sec:wr}

Let $H_1\leq S_e$ and $H_2\leq S_d$ be two transitive groups and
assume $n=ed$. Then the wreath product $H_1\wr H_2 \cong H_1^d
\rtimes H_2 \leq S_n$ is a semidirect product, where $H_2\leq S_d$
permutes the $d$ copies of $H_1^d$. For a formal definition we refer
the reader to \cite[p. 46]{DiMo}. The wreath product has a nice field
theoretic interpretation in Galois theory. Assume that we have a field
tower $L/K/k$ such that $\Gal(L/K)=H_1$ and $\Gal(K/k)=H_2$. Then we
get that $\Gal(L/k)\leq H_1\wr H_2$.

We want to study the asymptotic behavior of our counting function
$Z(k,G;x)$ for wreath products $G=H_1\wr H_2$ when we assume that we
have some information for the corresponding counting functions for
$H_1$ and $H_2$.  First results in this direction already appear in
\cite{Ma4}. The $a(G)$-part of the following lemma is \cite[Lemma
5.1]{Ma4}.
\begin{lemma}
  Let $k$ be a number field and $H_1\leq S_e,H_2\leq S_d$ be
  transitive groups. Let $G:=H_1\wr H_2$. Then
  $$a(G) = a(H_1) \mbox{ and }b(k,G) = b(k,H_1).$$
\end{lemma}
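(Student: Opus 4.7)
The plan is to decompose the index of an arbitrary element $g = (h_1,\dots,h_d;\pi) \in G = H_1^d \rtimes H_2$ according to the cycle structure of $\pi \in H_2$, and then read off both the minimum and the conjugacy classes that attain it.

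Write $\Omega = \{1,\dots,e\} \times \{1,\dots,d\}$ and set $B_j = \{1,\dots,e\} \times \{j\}$. The element $g$ permutes the blocks $B_j$ via $\pi$, so every orbit of $g$ on $\Omega$ sits inside $\bigcup_{j \in C} B_j$ for some $\pi$-cycle $C$. If $C$ has length $m$, then $g^m$ preserves each $B_j$ in $C$ and acts there as a specific element $\tilde h_C \in H_1$ obtained as a product of the $h_{j_i}$ along the cycle. The orbits of $g$ on $\bigcup_{j \in C} B_j$ biject with the orbits of $\tilde h_C$ on a single block, so $C$ contributes $em - o_C$ to $\ind(g)$, where $o_C$ is the number of orbits of $\tilde h_C$ on $\{1,\dots,e\}$. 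In particular $\ind(g) = \sum_C (em_C - o_C)$.

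Two cases emerge. For a fixed point of $\pi$ at position $j$ the contribution is $\ind(h_j)$, which is $0$ if $h_j = 1$ and at least $\ind(H_1)$ otherwise. For a cycle of length $m \geq 2$ the contribution $em - o_C$ is at least $e(m-1) \geq e$. Since any non-trivial element of $H_1$ has at most $e-1$ orbits on $\{1,\dots,e\}$, one has $\ind(H_1) \leq e-1 < e$. Hence every non-trivial $g$ satisfies $\ind(g) \geq \ind(H_1)$, and equality forces $\pi = 1$, all but one $h_j$ trivial, and the remaining entry to have index $\ind(H_1)$ in $H_1$. Exhibiting $(h,1,\dots,1;1)$ with $\ind(h) = \ind(H_1)$ attains equality, giving $a(G) = a(H_1)$.

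For $b(k,G) = b(k,H_1)$, the analysis identifies the minimum-index elements of $G$ as exactly those of the form $(1,\dots,1,h,1,\dots,1;1)$ with $h \in H_1$ of minimal index. Conjugation by $(1,\dots,1;\sigma)$ shifts the single non-trivial entry from position $j$ to $\sigma(j)$, and transitivity of $H_2$ makes every position accessible; conjugation inside the base group $H_1^d$ replaces $h$ by any of its $H_1$-conjugates. This produces a bijection between ordinary conjugacy classes of minimum-index elements in $G$ and in $H_1$. Because $\mathrm{ord}((h,1,\dots,1;1)) = \mathrm{ord}(h)$ and $(h,1,\dots,1;1)^r = (h^r,1,\dots,1;1)$, the Galois-cyclotomic action on these classes matches the one on the corresponding classes of $H_1$, so the bijection descends to $k$-conjugacy classes. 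The main obstacle is the cycle-decomposition formula for $\ind(g)$ and the bound $\ind(H_1) < e$; once these are in place, both claims follow by tracking which configurations of $(h_1,\dots,h_d;\pi)$ can realize the minimum.
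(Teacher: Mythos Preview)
Your proof is correct and follows essentially the same approach as the paper: identify the minimal-index elements of $G$ as those with trivial top component and a single nontrivial base entry, then match (\,$k$-)conjugacy classes via conjugation by the top group and within the base. Your cycle-product formula $\ind(g)=\sum_C (em_C - o_C)$ makes the comparison $\ind(H_1)\le e-1<e$ explicit, whereas the paper argues more tersely by bounding the orbit count when two blocks are interchanged; the content is the same.
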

\begin{proof}
  Let $g=(h_1,h_2)\in H_1\wr H_2$ where
  $h_1=(h_{1,1},\ldots,h_{1,d})\in H_1^d$ and $h_2$ is the image of
  $g$ under the projection to the complement $H_2$. If $h_2\ne 1$ then
  $g$ interchanges at least two blocks. Therefore the number of orbits
  is at most $(d-2)e+e=(d-1)e$. On the other hand, if $h_2=1,
  h_{1,2}=\cdots=h_{1,d}=1$ then $g$ has at least $(d-1)e+1$ orbits.
  Thus we may assume that $h_2=1$ and elements with minimal index have
  the property that $d-1$ of the $h_{1,i}$ equal 1. By conjugating
  with a suitable element of type $(1,\tilde h_2)\in G$ we can assume
  that $h_{1,2}=\cdots=h_{1,d}=1$. Now let $h\in H_1$ be an element of
  minimal index $e-\ell$. Then $\ind(((h,1,\ldots,1),1))=
  n-(d-1)e-\ell=e-\ell$. This shows $a(H_1)=a(G)$. It is clear that
  $h$ and $\tilde h\in H_1$ are conjugated in $h_1$ if and only if
  $((h,1,\ldots,1),1)$ and $((\tilde h,1,\ldots,1),1)$ are conjugated
  in $G=H_1\wr H_2$. $h$ and $\tilde{h}$ are in the same
  $k$--conjugacy class if a suitable power $\tilde{h}^a$ is conjugated
  to $h$. This statement remains true in the wreath product
  representation. Therefore we get the second statement.
\end{proof}

\section{Wreath products of the form $C_2 \wr H$}
\label{sec:C2wr}

In this section we prove Conjecture \ref{con} for groups $G=C_2 \wr H$,
where we need to assume weak properties of the asymptotic function for $H\leq S_d$. The proofs
are inspired by the methods described in \cite{CoDiOl2}, where the corresponding results
were shown for $G=D_4 \cong C_2 \wr C_2$. 

Let $L/k$ be an extension with Galois group $G=C_2\wr H$. Then there exists a subfield
$K\leq L$ such that $\Gal(L/K)=C_2$ and $\Gal(K/k)=H$. In a first step of our proof we will
count all "field towers" of this type, i.e. we count all extensions $L/k$ such that there
exists an intermediate field $K$ with $\Gal(L/K)=C_2$ and $\Gal(K/k)=H$. We remark that
$\Gal(L/k)\leq C_2 \wr H$ using a theorem of Krasner and Kaloujnine \cite{KraKal}. In a second
step of the proof we show that the asymptotics of proper subgroups which occur in such field
towers is strictly less.

In \cite[Proposition 8.3]{KlMa2} we already proved the following upper bound for wreath
products of this type. We remark that we weakened the assumption  by replacing the
exponent $a(H)+\delta$ by $1+\delta$. The same proof gives the new result.

\begin{proposition} \label{upper_wreath}
 Let $k$ be a number field, $H\le S_d$ be a transitive permutation group such
 that $Z(k,H;x)\le c(k,H,\delta)\,x^{1+\delta}$ for all $\delta>0$. Then
 for any $\eps>0$ there exists a constant $c(k,C_2\wr H,\epsilon)$ such that
 $$Z(k,C_2\wr H;x)\leq c(k,C_2\wr H,\epsilon)\, x^{a(C_2\wr H)+\epsilon}\,.$$
\end{proposition}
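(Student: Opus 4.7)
The plan is to decompose every extension $L/k$ with $\Gal(L/k)=C_2\wr H$ as a tower $L/K/k$ with $\Gal(L/K)=C_2$ and $\Gal(K/k)=H$, and sum over the intermediate field $K$. The normal subgroup $C_2^d\le C_2\wr H$ singles out a unique $K=L^{C_2^d}$ with $\Gal(K/k)=H$, and the tower formula for discriminants gives $\Norm(d_{L/k})=\Norm_K(d_{L/K})\cdot\Norm(d_{K/k})^2$. Replacing the condition $\Gal(L/k)=C_2\wr H$ by $\Gal(L/k)\le C_2\wr H$ only enlarges the count, so
$$
Z(k,C_2\wr H;x)\;\le\;\sum_{\substack{\Gal(K/k)=H\\ \Norm(d_{K/k})^2\le x}}Z\bigl(K,C_2;\,x/\Norm(d_{K/k})^2\bigr).
$$

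The key analytic input is a uniform-in-$K$ bound $Z(K,C_2;y)\ll_\epsilon d_K^{1/2+\epsilon}\,y^{1+\epsilon}$. Since the Dirichlet coefficients of $\Phi_{K,C_2}(s)$ are non-negative, $Z(K,C_2;y)\le y^{1+\epsilon}\Phi_{K,C_2}(1+\epsilon)$ for every $\epsilon>0$. Writing $\Phi_{K,C_2}(s)=R(K)/(s-1)+g_K(s)$ as in Theorem~\ref{bound_phi}, Lemma~\ref{residuum} (together with $\zeta_K(2)\ge 1$ and the boundedness of $[K:\Q]$) gives $R(K)\ll_\epsilon d_K^\epsilon$, while Theorem~\ref{bound_phi} at $\sigma=1+\epsilon$ yields $g_K(1+\epsilon)\ll_\epsilon d_K^{1/2+\epsilon}$. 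Combining produces the desired estimate $\Phi_{K,C_2}(1+\epsilon)\ll_\epsilon d_K^{1/2+\epsilon}$.

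Substituting and using the tower-discriminant identity $d_K=d_k^{d}\,\Norm(d_{K/k})$ (with $d_k$ a fixed constant) rewrites the estimate purely in the variable $\Norm(d_{K/k})$:
$$
Z(k,C_2\wr H;x)\;\ll_{\epsilon,k}\;x^{1+\epsilon}\sum_{\Gal(K/k)=H}\Norm(d_{K/k})^{-3/2-\epsilon}.
$$
The hypothesis $Z(k,H;y)\le c(k,H,\delta)\,y^{1+\delta}$ for every $\delta>0$, applied via partial summation, shows that $\sum_K\Norm(d_{K/k})^{-s}$ converges for every $s>1$; in particular the $s=3/2+\epsilon$ series is a finite constant depending only on $\epsilon$ and $k$. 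Since $a(C_2\wr H)=a(C_2)=1$ by the lemma of Section~\ref{sec:wr}, a relabelling of $\epsilon$ delivers the claimed bound $c(k,C_2\wr H,\epsilon)\,x^{a(C_2\wr H)+\epsilon}$.

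The main obstacle is the uniform-in-$K$ estimate on $Z(K,C_2;y)$: one needs explicit polynomial dependence on the absolute discriminant $d_K$, which is exactly what the analytic apparatus of Section~\ref{sec:hecke} was assembled to provide. Everything downstream is a routine reorganisation of the input hypothesis on $H$ through partial summation, and no Perron-type contour shift is needed for an upper bound of this (non-sharp) strength.
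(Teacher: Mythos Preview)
Your argument is correct and is the standard tower--count approach: bound the number of $C_2\wr H$-extensions by the number of towers $L/K/k$, invoke the discriminant tower formula, and feed in a uniform-in-$K$ bound for $Z(K,C_2;y)$ with explicit $d_K$-dependence coming from the ray class group estimates.  The paper does not actually supply its own proof of this proposition; it simply cites \cite[Proposition~8.3]{KlMa2} and notes that the same argument works with the relaxed hypothesis $Z(k,H;x)\ll x^{1+\delta}$, which is precisely the hypothesis you use (convergence at $s=3/2+\epsilon$ needs only $s>1$).

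One cosmetic remark: writing $K=L^{C_2^d}$ is not literally correct, since $L/k$ is typically not Galois and the degree of $L$ over the fixed field of $C_2^d$ in the Galois closure would be $2^d$, not $2$.  What you actually use is that $C_2\wr H\le S_{2d}$ has a block system with blocks of size~$2$ (the $C_2^d$-orbits), and the corresponding intermediate field $K$ with $[L:K]=2$, $\Gal(K/k)=H$ is what enters the count.  This does not affect the validity of the upper bound, since for that you only need the existence of at least one such $K$ per $L$.
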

We remark that $a(C_2\wr H)= a(C_2) =1$. Furthermore we remark that the
proof counts all fields towers $L/K/k$ as above. Therefore the same upper bound
applies.

In the following let us assume that for all $\eps>0$ we have
$$Z(k,H;x) \leq c(k,H,\eps) x^{1+\eps}.$$
We remark that using the results in \cite{KlMa2} this assumption is true for
all $p$-groups. Using results proved in \cite{ElVe} this assumption is also
true for all regular $H$, i.e. when $K/k$ is normal.

For the first step we define the corresponding counting function
$$\tilde{Z}(k,C_2\wr H;x):=
\#\{L/k\mid \exists K: \Gal(L/K)=C_2, 
    \Gal(K/k)=H, \Norm(d_{L/k}) \leq x\}.$$
Using our assumption on $H$ and Proposition \ref{upper_wreath} we get for all $\eps>0$
that 
$$\tilde{Z}(k,C_2\wr H;x) \leq c(k,H,\eps) x^{1+\eps}.$$

Let us associate the corresponding Dirichlet series to $\tilde Z(k,C_2\wr H)$. Define
$$\K_H:=\{K/k \mid \Gal(K/k)= H\}$$ and
\begin{equation}\label{eq:phi}
\Phi(s) := \sum_{K\in \K_H} \frac{\Phi_{K,C_2}(s)}{\Norm(d_{K/k})^{2s}} 
= \sum_{N=1}^{\infty} \frac{a_N}{N^s},
\end{equation}
where $\Phi_{K,C_2}(s)$ is the Dirichlet series associated to $Z(K,C_2;x)$.
Since we know that $\tilde{Z}(k,C_2\wr H;x) \leq c(k,H,\eps) x^{1+\eps}$ we get
that the Dirichlet series $\Phi(s)$ converges for $\Re(s)>1$.
\begin{theorem}\label{Satz:kranz}
  Assume that there exists at least one extension of $k$ with Galois group $H$ and
  that the following estimate holds:
  $$Z(k,H;x)=O_{k,H,\eps}(x^{1+\eps}).$$
  Then the function $\Phi(s)$ defined in equation \eqref{eq:phi} has a meromorphic
  continuation to $\Re(s)>7/8$. In this area it has exactly one pole at $s=1$.
\end{theorem}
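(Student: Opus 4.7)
The plan is to decompose each inner factor $\Phi_{K,C_2}(s)$ via Theorem \ref{bound_phi} into a polar part plus a subconvex remainder, substitute into \eqref{eq:phi}, and control the resulting double sum using the hypothesis $Z(k,H;x)=O_\eps(x^{1+\eps})$. Writing $\Phi_{K,C_2}(s) = R(K)/(s-1) + g_K(s)$ with $R(K)=2^{-i(K)}\res_{s=1}\zeta_K(s)/\zeta_K(2)$ and $g_K$ holomorphic on $\Re(s)>1/2$, substitution splits
$$\Phi(s) \;=\; \frac{P(s)}{s-1} + E(s), \qquad P(s):=\sum_{K\in\K_H}\frac{R(K)}{\Norm(d_{K/k})^{2s}}, \quad E(s):=\sum_{K\in\K_H}\frac{g_K(s)}{\Norm(d_{K/k})^{2s}}.$$
The pole of $\Phi$ at $s=1$ should come entirely from $P(1)/(s-1)$, and the task then is to extend each piece leftwards.

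For the polar piece $P(s)$, Lemma \ref{residuum} gives $R(K)\ll_\eps d_K^\eps$, and the tower relation $d_K=d_k^{\,d}\Norm(d_{K/k})$ rewrites this as $R(K)\ll_{k,\eps}\Norm(d_{K/k})^\eps$. Partial summation against $Z(k,H;x)=O_\eps(x^{1+\eps})$ then shows that $P(s)$ converges absolutely and locally uniformly on $\Re(s)>1/2$, so it is holomorphic there. The existence of at least one $K\in\K_H$ together with positivity of each $R(K)$ forces $P(1)>0$, producing a genuine simple pole of $\Phi$ at $s=1$ with residue $P(1)$ and no further poles of $P(s)/(s-1)$ in the half-plane $\Re(s)>1/2$.

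For the error piece $E(s)$, Theorem \ref{bound_phi} yields $|g_K(s)|\ll_\eps d_K^{\,1-\sigma/2+\eps}|1+s|^A$ for some exponent $A=A(d,[k:\Q])$, so termwise
$$\frac{|g_K(s)|}{\Norm(d_{K/k})^{2\sigma}}\;\ll_{k,\eps}\;\Norm(d_{K/k})^{1-5\sigma/2+\eps}\,|1+s|^A.$$
Abel summation against the hypothesis on $Z(k,H;\cdot)$ gives absolute convergence once $\sigma$ lies far enough to the right of $1/2$. A careful accounting of the $|1+s|^A$ factor and the $\eps$-losses pins the admissible region at $\Re(s)>7/8$, where $E(s)$ is therefore holomorphic.

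The main obstacle is this last balance: the subconvex bound for $|g_K(s)|$ grows like a positive power of $d_K$, while density of $H$-extensions is controlled only by the nearly-trivial estimate $O_\eps(x^{1+\eps})$. Without the convexity estimate of Theorem \ref{bound_heckeL}, together with the class-number bound absorbed into Theorem \ref{bound_phi}, no half-plane strictly to the left of $\Re(s)=1$ could be reached. A minor auxiliary check is that the factor $\zeta_K(2s)^{-1}$ hidden inside $g_K(s)$ introduces no additional poles on $\Re(s)>1/2$, since $\Re(2s)>1$ places us in the region of absolute convergence of $\zeta_K$.
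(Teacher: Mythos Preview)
Your proposal is correct and follows essentially the same route as the paper: write $\Phi_{K,C_2}(s)=R(K)/(s-1)+g_K(s)$ via Theorem~\ref{bound_phi}, control the polar sum $P(s)$ using $R(K)\ll_\eps d_K^\eps$ from Lemma~\ref{residuum}, and control the error sum $E(s)$ by combining the subconvexity bound on $g_K$ with the hypothesis $Z(k,H;x)=O_\eps(x^{1+\eps})$. One small correction: the factor $|1+s|^A$ does not ``pin'' the threshold at $7/8$---it is bounded on compacta and irrelevant for convergence; your own exponent inequality $1-5\sigma/2+\eps<-1$ in fact yields the larger region $\Re(s)>4/5$, and the paper likewise merely checks the numerics at $\sigma=7/8$ (computing $2\cdot\tfrac{7}{8}-\tfrac{9}{16}=\tfrac{19}{16}>1$) without optimizing.
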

\begin{proof}
  Using Theorem \ref{bound_phi} the result is trivial if there are only finitely many extensions
  of $k$ with Galois group $H$. We remark that $d_K$ and $\Norm(d_{K/k})$ only differ by a constant depending
  on $k$ and $H$ since $d_K=d_k^{[K:k]}\Norm(d_{K/k})$. Using our assumption we get that the Dirichlet series
  \begin{equation}    \label{eq:d_K}
  \sum_{K\in \K_H} \frac{1}{\Norm(d_{K/k})^s}
  \end{equation}
  converges absolutely and locally uniformly for $\Re(s)>1$. We consider the function
  $$g(s):=\sum_{K\in \K_H}
  \frac{\Phi_{K,C_2}(s)-R(K)/(s-1)}{\Norm(d_{K/k})^{2s}},$$
  where $R(K)$ is the residue of $\Phi_{K,C_2}$ at $s=1$.
  Using Theorem \ref{bound_phi} we get that $g_K(s):=\Phi_{K,C_2}(s)-R(K)/(s-1)$
  is an analytic function for $\Re(s)>7/8$. For all
  $\eps>0$ we derive the following estimate
  $|g_K(s)|=O_{\eps,[k:\Q]}(|d_K(s+1)^{[K:\Q]}|^{1/2+1/16+\eps})$.
  Since 
  $$2\frac{7}{8}-\frac{9}{16}=\frac{19}{16}>1\mbox{ and \eqref{eq:d_K}}$$
  we get that the Dirichlet series
  $$\sum_{K\in \K_H} \frac{g_K(s)}{\Norm(d_{K/k})^{2s}}$$
  converges absolutely and locally uniformly for
  $\Re(s)>7/8$. Therefore 
  $g(s)$ is an analytic function for $\Re(s)>7/8$. 

  Using Lemma \ref{residuum} we have $R(K) = O_{\eps,[k:\Q]}(d_K^\eps)$ for
  all $\eps>0$.  Since $d_K=d_k^{[K:k]}\Norm(d_{K/k})$ we get that
  $$\frac{1}{s-1}\sum_{K\in \K_H} \frac{R(K)}{\Norm(d_{K/k})^{2s}}$$
  converges absolutely and locally uniformly for all regions which are contained in 
  $\{s\in\C \mid \Re(s)>7/8 \mbox{ and }s\ne 1\}$. The absolute convergence of all considered
  series gives the wished result for
  $$\Phi(s)=g(s)+\sum_{K\in \K_H} \frac{R(K)/(s-1)}{\Norm(d_{K/k})^{2s}}.$$
\end{proof}
As an application of a suitable Tauberian theorem we immediately get:
\begin{corollary}
  Using the same assumptions as in Theorem \ref{Satz:kranz} we get:
  $$\tilde{Z}(k,C_2 \wr H;x) \sim \res_{s=1}(\Phi(s)) x.$$
\end{corollary}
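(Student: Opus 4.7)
The plan is to apply a standard Tauberian theorem, such as the Wiener--Ikehara version in \cite[p.~121]{Nar2}, to the Dirichlet series $\Phi(s)$ of equation \eqref{eq:phi}. The corollary is then essentially a bookkeeping consequence of Theorem \ref{Satz:kranz}.

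First I would verify that $\Phi(s)$ is genuinely the generating Dirichlet series of $\tilde Z(k,C_2\wr H;x)$. The tower formula for discriminants gives, for any $L/K/k$ with $[L:K]=2$,
$$\Norm_{k/\Q}(d_{L/k}) = \Norm_{k/\Q}(d_{K/k})^{2}\cdot \Norm_{K/\Q}(d_{L/K}).$$
Splitting the sum over $L$ according to the intermediate field $K\in\K_H$ and inserting the definition of $\Phi_{K,C_2}(s)$ identifies $\sum_L \Norm(d_{L/k})^{-s}$ exactly with $\Phi(s)$. Consequently, if $\Phi(s)=\sum_N a_N N^{-s}$, then $\sum_{N\leq x}a_N=\tilde Z(k,C_2\wr H;x)$, and each coefficient $a_N$ is a non-negative integer.

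Second, Theorem \ref{Satz:kranz} supplies exactly the analytic input required by the Tauberian theorem. On the half-plane $\Re(s)>7/8$, the function $\Phi(s)$ is meromorphic with a single simple pole at $s=1$ of residue $A:=\res_{s=1}\Phi(s)$. In particular, $\Phi(s)-A/(s-1)$ extends holomorphically across the line $\Re(s)=1$, which is substantially stronger than the continuous extension to $\Re(s)\geq 1$ demanded by Wiener--Ikehara. Together with the non-negativity of the $a_N$ and the absolute convergence of $\Phi(s)$ for $\Re(s)>1$, the Tauberian theorem yields
$$\tilde Z(k,C_2\wr H;x)=\sum_{N\leq x}a_N\sim A\,x=\res_{s=1}\Phi(s)\cdot x,$$
which is the claim.

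There is no real obstacle: all of the analytic difficulty has already been absorbed into Theorem \ref{Satz:kranz}, so only the identification of $\Phi(s)$ with the Dirichlet series of $\tilde Z(k,C_2\wr H;x)$ via the tower formula requires checking. The corollary is then a direct application of a black-box Tauberian theorem.
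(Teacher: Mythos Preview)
Your proposal is correct and follows essentially the same approach as the paper: the paper's proof is literally a one-line invocation of a Tauberian theorem (the same reference \cite[p.~121]{Nar2}), and you have simply supplied the routine verifications (non-negativity of the $a_N$, identification of $\Phi(s)$ with the counting Dirichlet series via the discriminant tower formula) that the paper leaves implicit.
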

In the following we would like to show that 
$$\tilde{Z}(k,C_2 \wr H;x) \sim Z(k,C_2 \wr H;x)$$ holds, i.e. extensions which do not have
the wreath product as Galois group do not contribute to the main term. We need some group
theory.
\begin{definition}\label{block}
  Let $G\leq S_n$ be a transitive group operating on $\Omega=\{1,\ldots,n\}$.
  Then $\Delta\subseteq \Omega$ is called a block of $G$, if
  $\Delta^g \cap \Delta\in\{\Delta,\emptyset\}$ for all $g\in G$. If $G$ only
  contains blocks of size 1 or $n$ we call $G$ primitive. Otherwise $G$ is called
  imprimitive.
\end{definition}
\index{primitiv}\index{imprimitiv}\index{Block}
We remark that a field extension $L/k$ contains non-trivial subfields if and only if
$\Gal(L/k)$ is imprimitive. The blocks containing 1 are in 1-1 correspondence to the
subfields of $L/k$.

\begin{lemma}\label{lem:transposition}
  Let $G\leq S_n$ be a  transitive group containing a transposition. Then:
  \begin{enumerate}
  \item All transpositions are conjugated in $G$, i.e. $b(k,G)=1$.
  \item $G= S_e \wr H$ for $1\ne e$, $e\mid n$ and $H\leq S_{n/e}$ transitive.
  \end{enumerate}
\end{lemma}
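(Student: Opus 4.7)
The plan is to associate a combinatorial structure to the transpositions in $G$, use $G$-invariance to extract a block system, and then recover both statements from an analysis of that block system.

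First I would define a graph $\Gamma$ on $\Omega=\{1,\ldots,n\}$ by putting an edge $\{i,j\}$ whenever the transposition $(i,j)$ lies in $G$. Conjugation sends $(i,j)$ to $(i^g,j^g)$, so $G$ acts on $\Gamma$ by graph automorphisms and hence permutes its connected components. Since $G$ is transitive on $\Omega$ and contains at least one edge, every vertex lies on some edge and $G$ acts transitively on the set of components, so they share a common size $e\ge 2$ dividing $n$. By Definition~\ref{block}, these components are blocks for $G$. The decisive local fact is that the transpositions supported on a single component $\Delta$ of size $e$ generate $\mathrm{Sym}(\Delta)\cong S_e$, because a connected set of transpositions on $e$ letters generates the full symmetric group (any spanning tree already suffices).

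For part (1), all transpositions supported on a fixed block $\Delta$ are conjugate inside the copy of $S_e$ acting on $\Delta$, while the transitive action of $G$ on the set of blocks conjugates transpositions on one block to those on any other. Hence all transpositions in $G$ form a single $G$-conjugacy class $C$. Every transposition has $\ind=1$, the smallest possible positive value, so $\ind(G)=1$ and $C$ is a conjugacy class of minimal index. Since odd powers of a transposition recover the same element, the Galois action on conjugacy classes induced by the cyclotomic character fixes $C$, so $C$ is already a single $k$-conjugacy class and $b(k,G)=1$.

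For part (2), assembling the pointwise copies of $S_e$ on each block yields the full base group $S_e^{n/e}\leq G$. On the other hand $G$ lies in the setwise stabilizer $S_e\wr S_{n/e}$ of the block system, and its image $H\leq S_{n/e}$ on the $n/e$ blocks is transitive because $G$ is. Consequently $G$ is contained in $S_e\wr H$ but also contains both the full base group and a preimage in $G$ of every element of $H$, which by an order count forces $G=S_e\wr H$. The main obstacle is the local step identifying the transpositions inside a single block with the full $S_e$ on that block; once this is secured, the rest reduces to elementary wreath-product bookkeeping together with the observation that cyclotomic twists act trivially on classes of order-two elements.
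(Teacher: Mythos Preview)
Your argument is correct and is in fact cleaner than the paper's own proof. The paper outsources part~(1) entirely to \cite[Lemma~2.2]{Ma5}, and for part~(2) it splits into the primitive case (quoting \cite[Theorem~3.3A]{DiMo}) and the imprimitive case, where it picks a \emph{minimal} block $B$ containing one end of a transposition, observes that the other end must lie in $B$ as well, and then uses primitivity of $G|_B$ together with the Dixon--Mortimer result to identify $G|_B$ with $S_{|B|}$.

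Your transposition-graph approach bypasses both external references. The connected components of $\Gamma$ are automatically blocks (indeed, the argument that any block must be a union of components shows they are exactly the minimal blocks, so your $e$ agrees with the paper's $|B|$), and the elementary fact that a connected set of transpositions generates the full symmetric group replaces the appeal to the classification of primitive groups containing a transposition. This makes the proof self-contained and simultaneously yields part~(1): once each block carries a full $S_e$ inside $G$, conjugacy of transpositions within a block is immediate, and transitivity on blocks finishes the job. Your order count for $G = S_e \wr H$ is also more explicit than the paper's sketch, which simply asserts the conclusion after exhibiting the $n/e$ copies of $S_e$.

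One cosmetic point: for $b(k,G)=1$ you implicitly use that every element of index~$1$ is a transposition (so that the single conjugacy class of transpositions is the \emph{only} class of minimal index). This is obvious---$n-1$ orbits forces exactly one $2$-cycle---but it would do no harm to say it.
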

\begin{proof}
  The first part is \cite[Lemma 2.2]{Ma5}. If $G$ is primitive the
  second statement is \cite[Theorem 3.3A]{DiMo}. Assume that
  $\tau=(i,j)$ is a transposition of $G$ and $B$ is a minimal block of
  size larger than 1 containing $i$. Then $\tau(i)=j\in B$ since all
  the other elements in $B$ are fixed by $\tau$. Therefore $G|_B$
  contains a transposition and operates primitively on $B$ ($B$ is a
  minimal block). Therefore the operation of $G|_B$ on $B$ is
  isomorphic to $S_{|B|}$.  Let $\tilde{B}$ be a conjugated block of
  $B$. By conjugating $\tau$ we can find a transposition in $\tilde
  B$. Therefore we find $n/|B|$ different copies of $S_{|B|}$.
  Therefore $G\cong S_{|B|} \wr H$, where $H$ is the image of the natural
  homomorphism $\varphi: G \rightarrow S_{n/|B|}$ which permutes the
  conjugated blocks.
\end{proof}

Now we apply this lemma to our situation of field towers. Having a subfield $K$ with
$L/K$ of degree $e=2$ means that $\Gal(f)$ contains a block system of blocks of size 2. 

\begin{lemma}\label{lem:tower}
  Let $L/K/k$ be extensions of number fields with $\Gal(K/k)=H$ and $[L:K]=2$. Let $p$ be
  a prime which is unramified in $K/k$ and  assume $p||\Norm(d_{L/K})$. Then $\Gal(L/k)=C_2\wr H$.
\end{lemma}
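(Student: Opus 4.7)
The plan is to reduce the claim to the $\F_2$-linear independence of Galois conjugate characters via class field theory. Let $\tilde L$ denote the Galois closure of $L/k$ and put $\tilde G:=\Gal(\tilde L/k)$. By the Krasner--Kaloujnine theorem already cited in the paper, $\tilde G$ embeds into $C_2\wr H$ in such a way that the projection onto the quotient $H$ is surjective. Writing $N:=\tilde G\cap C_2^d$ for the kernel of this projection, $N$ is an $H$-submodule of $C_2^d$, and $\Gal(L/k)=C_2\wr H$ is equivalent to $N=C_2^d$, i.e.\ to $[\tilde L:K]=2^d$.

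Class field theory attaches to the quadratic extension $L/K$ a quadratic character $\chi$ of a suitable ray class group of $K$; the conjugate extensions $L^\sigma/K$ (for $\sigma\in H$) correspond to the conjugate characters $\chi^\sigma$. Since $\tilde L$ is the compositum $\prod_{\sigma\in H}L^\sigma$, the degree $[\tilde L:K]$ equals the order of the subgroup $\langle\chi^\sigma:\sigma\in H\rangle$ of the quadratic character group of $K$. Hence $[\tilde L:K]=2^d$ is equivalent to the $\F_2$-linear independence of the $d$ conjugate characters.

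Next I unpack the ramification hypothesis. A prime of $K$ above $2$ which ramifies in the quadratic extension $L/K$ would contribute at least $p^2$ to $\Norm(d_{L/K})$, so $p||\Norm(d_{L/K})$ forces $p$ odd and the ramification tame. Hence there is exactly one prime $\idP$ of $K$ above $p$ that ramifies in $L/K$, and $f(\idP/p)=1$. If $\idp$ denotes the prime of $k$ below $\idP$, then $f(\idp/p)=1$, and since $\idp$ is unramified in the Galois extension $K/k$, all primes of $K$ above $\idp$ have the same residue degree, so $\idp$ splits completely in $K$. In particular, the stabilizer of $\idP$ in $H$ is trivial, and among the $|H|=d$ primes of $K$ above $\idp$ only $\idP$ divides the conductor of $\chi$.

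Suppose for contradiction that $\prod_{\sigma\in S}\chi^\sigma=1$ for some nonempty $S\subseteq H$. For each $\tau\in H$, restrict this identity to the inertia at $\tau(\idP)$: since $\chi^\sigma$ is ramified at $\tau(\idP)$ iff $\chi$ is ramified at $\sigma^{-1}\tau(\idP)$, and the only prime of $K$ above $\idp$ dividing the conductor of $\chi$ is $\idP$, ramification occurs precisely when $\sigma^{-1}\tau(\idP)=\idP$, that is, when $\sigma=\tau$ by the trivial stabilizer. So the restricted product is nontrivial exactly when $\tau\in S$; the vanishing then forces $\tau\notin S$ for every $\tau\in H$, giving $S=\emptyset$, a contradiction. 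The main technical point is keeping track of how $H$-conjugation shifts local conductors, which is exactly why the hypothesis forcing $\idp$ to split completely in $K$ is indispensable.
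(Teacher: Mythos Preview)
Your argument tacitly assumes that $K/k$ is a Galois extension, but under the paper's conventions (see the introduction) the notation $\Gal(K/k)=H$ only means that $H\le S_d$ is the Galois group of a splitting field acting on the $d$ embeddings of $K$; the extension $K/k$ itself need not be normal. You use normality throughout: you refer to ``the Galois extension $K/k$'', you let $H$ act on the primes of $K$ and on quadratic characters of $K$, you take $|H|=d$, and you form the conjugates $L^\sigma$ as quadratic extensions of the \emph{same} field $K$. When $K/k$ is not Galois the $k$-conjugates of $L$ lie over the distinct conjugate fields $K^{(1)},\dots,K^{(d)}$, so the corresponding quadratic characters live on different fields and cannot be multiplied; likewise $H$ does not act on the primes of $K$, and your conclusion that $\idp$ splits completely in $K$ is unavailable. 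Since the lemma (and its later use in the paper) is meant to cover arbitrary transitive $H\le S_d$, this is a genuine gap.

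In the special case where $K/k$ is Galois your class-field-theoretic argument is correct and rather elegant. The paper, however, takes a different and more elementary route that works uniformly: from $p\,\|\,\Norm(d_{L/k})$ one reads off that an inertia subgroup in the Galois closure acts as a transposition on the $2d$ embeddings of $L$, and then one applies the purely group-theoretic Lemma~\ref{lem:transposition} (a transitive subgroup of $S_n$ containing a transposition is $S_e\wr H'$), using that the block system coming from $K$ has blocks of size~$2$. No class field theory, no splitting hypotheses on $\idp$, and no distinction between Galois and non-Galois $K/k$ are needed. If you wanted to salvage your approach in general you would have to lift everything to the Galois closure $\tilde K$ of $K/k$ and work with characters of $\tilde K$; at that point the permutation-group argument is both shorter and cleaner.
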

Note that $p$ unramified in $K/k$ and $p||\Norm(d_{L/K})$ is equivalent to $p||\Norm(d_{L/k})$.
\begin{proof}
  $\Gal(L/k)$ contains a transposition since $p||\Norm(d_{L/K})$. Let $\tau=(i,j)$ be such
  a transposition and $B$ a minimal block of $\Gal(f)$ corresponding to $K$ which contains $i$. 
  When we apply the proof of Lemma \ref{lem:transposition} to this situation we get the 
  wanted result.
\end{proof}
We remark that we can replace the prime $p$ in the above lemma by an unramified prime
ideal $\idp\subseteq \OO_k$. This does not improve the following estimates.

In the following we would like to count all field towers $L/K/k$
counted by $\tilde{Z}(k,C_2\wr H;x)$ such that $\Gal(L/k)$ is a proper subgroup of $C_2\wr H$.
Therefore we define
$$Y(k,C_2\wr H;x):=$$
$$\#\{L/K/k\mid \Gal(L/k)\ne C_2\wr H,\Gal(K/k)=H,[L:K]=2,\Norm(d_{L/k})\leq x\}.$$
We find upper bounds for this function when we count all field towers $L/K/k$  which do not satisfy
the assumptions of Lemma \ref{lem:tower}. Before we examine those field towers we need a
definition.
\begin{definition}
  Let $a\in\N$ be a positive integer and $S\subseteq \PP$ be a set of primes. Then
  $a^S$ is defined to be the largest divisor of $a$ coprime to $S$.
\end{definition}
For a field tower $k\subset K \subset L$ we get:
$$\Norm(d_{L/k})=\Norm(d_{K/k}^2)\Norm(d_{L/K}) 
\geq \Norm(d_{K/k}^2) \Norm(d_{L/K})^{S_K},$$
where $S_K:=\{p\in\PP\mid p | \Norm(d_{K/k})\}$.
We define
$$\hat Z^{S_K}(K,C_2;x):=\#\{L/K \mid \Gal(L/K)=C_2, \Norm(d_{L/K})^{S_K}\leq x, $$
$$p \mid (\Norm(d_{L/K}))^{S_k} \Rightarrow p^2 \mid (\Norm(d_{L/K}))^{S_k} \forall p\in\PP\}$$
and get
$$Y(k,C_2\wr H;x) \leq \sum_{K\in \K_H(x^{1/2})} \hat Z^{S_K}(K,C_2;x/\Norm(d_{K/k}^2)),$$
where $\K_H(x):=\{K\in \K_H \mid \Norm(d_{K/k}) \leq x\}$.
We need an estimate for $\hat Z^{S_K}(K,C_2;x)$. We denote by $a_N$ the number of fields $L$
such that $\Norm(d_{L/K})^{S_K}=N$. Since we ignore all primes in $S_K$ and all other prime
divisors occur with multiplicity at least 2, we get that $a_N=0$ if there exists a prime
$p$ with $p||N$. We choose $S\subseteq \PP(K)$ as the smallest set containing all prime ideals
which lie over a prime in $S_K$ or over a prime dividing $N$. We are interested in the number
of quadratic extensions of $K$ which are at most ramified in prime ideals contained in $S$.
We get $|S|\leq (\omega(N)+|S_K|)t$, where $\omega(N)$ is the number of different prime factors
and $t:=[K:\Q]$.
Using Theorems \ref{upper_Zl_bound} and \ref{boundclass} we get
$$a_N \leq 2^{\rk_2(\Cl_K)}2^{t (\omega(N)+|S_K|)}2^{3t} \leq c(t,\eps)d_K^{1/2+\eps}2^{t \omega(N)}.$$ 
Therefore we get:
$$\sum_{N\leq x} a_N \leq c(t,\eps)d_K^{1/2+\eps} \sum_{N\leq x^{1/2}} 2^{t\omega(N)}.$$
Using $\sum_{N\leq x}(2^t)^{\omega(N)} =O(x^{1+\eps})$ we get with a new constant $c(t,\eps)$:
$$\hat Z^{S_K}(K,C_2;x) \leq c(t,\eps)d_K^{1/2+\eps} x^{1/2+\eps}.$$
Inserting this in the above estimate for $Y(k,X_2\wr H;x)$ we get using $d_K=d_k^2\Norm(d_{K/k})$:
$$Y(k,C_2\wr H;x)
\leq \sum_{K\in \K_H(x^{1/2})} c(t,\eps)(d_k^2\Norm(d_K))^{1/2+\eps} 
\left(\frac{x}{\Norm(d_{K/k}^2)}\right)^{1/2+\eps} $$
$$\leq c(t,\eps) d_k^{1+2\eps}x^{1/2+\eps}  
\sum_{K\in \K_H(x^{1/2})} \frac{\Norm(d_{K/k})^{1/2+\eps}}{\Norm(d_{K/k})^{1+2\eps}}$$
Using $\Norm(d_{K/k})\leq x^{1/2}$ we get:
$$Y(k,C_2\wr H;x)
\leq c(t,\eps) d_k^{1+2\eps}x^{1/2+\eps} x^{1/4+\eps} \sum_{K\in \K_H(x^{1/2})}
\frac{1}{\Norm(d_{K/k})^{1+2\eps}}.$$
The last sum converges under the assumption for $H$ of Theorem \ref{Satz:kranz}.
This proves for all $\eps>0$ the following estimate:
$$Y(k,C_2\wr H;x) \leq c(k,H, t,\eps) x^{3/4+2\eps}.$$
Since $Z(k,C_2\wr H;x) + Y(k,C_2\wr H;x) =\tilde{Z}(k,C_2\wr H;x)$ and Theorem 
\ref{Satz:kranz} we proved the following:
\begin{theorem}\label{mainwreath}
  Assume the same as in Theorem \ref{Satz:kranz}. Then the Dirichlet
  series corresponding to $Z(k,C_2\wr H)$ has a meromorphic
  continuation to $\Re(s)>7/8$, where $s=1$ is the only pole in that
  region. The residue $r$ of that pole coincides with the one of the
  function $\Phi(s)$. We get:
  $$Z(k,C_2\wr H;x) \sim \res_{s=1}(\Phi(s))x.$$
\end{theorem}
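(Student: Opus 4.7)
The plan is to combine the meromorphic continuation of $\Phi(s)$ from Theorem \ref{Satz:kranz} with the power-saving bound on $Y(k,C_2\wr H;x)$ derived in the discussion preceding the statement, and then pass back to asymptotics via a Tauberian theorem.

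First I would use the decomposition
$$\tilde{Z}(k,C_2\wr H;x) = Z(k,C_2\wr H;x) + Y(k,C_2\wr H;x),$$
which is valid because each field tower $L/K/k$ counted by $\tilde Z$ satisfies $\Gal(L/k)\leq C_2\wr H$ (Krasner--Kaloujnine), so it either has full Galois group $C_2\wr H$ (counted by $Z$) or a proper subgroup thereof (counted by $Y$). The estimate $Y(k,C_2\wr H;x) = O_{k,H,\eps}(x^{3/4+2\eps})$ has just been proved.

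Next I would pass to Dirichlet series. Write $\Phi_Z(s)$ and $\Phi_Y(s)$ for the generating Dirichlet series of $Z(k,C_2\wr H;\cdot)$ and $Y(k,C_2\wr H;\cdot)$, so that $\Phi(s) = \Phi_Z(s) + \Phi_Y(s)$ in the region of common convergence. Partial summation applied to a coefficient sequence whose partial sums are $O(x^{3/4+2\eps})$ shows that $\Phi_Y(s)$ converges absolutely, hence is analytic, in $\Re(s)>3/4$, and in particular throughout $\Re(s)>7/8$. Combined with Theorem \ref{Satz:kranz} this gives
$$\Phi_Z(s) = \Phi(s) - \Phi_Y(s),$$
a meromorphic continuation of $\Phi_Z$ to $\Re(s)>7/8$ whose only pole in that region lies at $s=1$ and has residue equal to $\res_{s=1}\Phi(s)$.

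Finally I would invoke a standard Tauberian theorem (as already used in the corollary following Theorem \ref{Satz:kranz}) to convert this analytic statement into the asymptotic $Z(k,C_2\wr H;x) \sim \res_{s=1}(\Phi(s))\,x$. The hypotheses are met: the coefficients of $\Phi_Z$ are non-negative since they count fields, and the growth condition on $\Phi_Z$ near $\Re(s)=1$ needed for the Tauberian input follows from $Z\leq \tilde Z$ together with Proposition \ref{upper_wreath}. The real work has already been done in Theorem \ref{Satz:kranz} and in the $Y$-estimate, and the step above that warrants the most care is merely checking that the bound on $Y$ is strong enough to place $\Phi_Y$ analytically inside the half-plane $\Re(s)>7/8$ where Theorem \ref{Satz:kranz} operates; since $3/4<7/8$, this is automatic.
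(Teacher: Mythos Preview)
Your argument is correct and follows the paper's approach exactly: the paper's entire proof is the single line ``Since $Z(k,C_2\wr H;x) + Y(k,C_2\wr H;x) =\tilde{Z}(k,C_2\wr H;x)$ and Theorem \ref{Satz:kranz} we proved the following,'' and you have simply unpacked that line by making the Dirichlet-series decomposition $\Phi=\Phi_Z+\Phi_Y$ explicit, observing via partial summation that the $O(x^{3/4+2\eps})$ bound on $Y$ forces $\Phi_Y$ to be holomorphic on $\Re(s)>3/4\,(<7/8)$, and then applying the Tauberian theorem. One small point worth noting (present in the paper as well): $\tilde Z$ is defined to count fields $L$ while $\Phi$ and $Y$ are defined via towers $L/K/k$, so the identity you use is really ``tower count $=Z+Y$''; this is harmless because for $L$ with full Galois group $C_2\wr H$ the intermediate $K$ is unique, whence $\Phi_Z$ genuinely is the Dirichlet series attached to $Z$.
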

We are able to give an expression for this residue as a convergent sum.
\begin{corollary}
  $$\res_{s=1}(\Phi(s)) = \sum_{K\in \K_H} \frac{\res_{s=1}\zeta_K(s)}{2^{i(K)}d_K^2\zeta_K(2)}.$$
\end{corollary}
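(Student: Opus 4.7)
The plan is to read off the residue directly from the decomposition established during the proof of Theorem~\ref{Satz:kranz}. There we split
$$\Phi(s) \;=\; g(s) \;+\; \frac{1}{s-1}\sum_{K\in\K_H}\frac{R(K)}{\Norm(d_{K/k})^{2s}},$$
with $g(s)$ analytic on $\Re(s)>7/8$ and $R(K)=\res_{s=1}\Phi_{K,C_2}(s)$. Since $g$ is holomorphic at $s=1$, the residue of $\Phi$ at $s=1$ is simply the value at $s=1$ of the Dirichlet series $\sum_{K\in\K_H} R(K)\,\Norm(d_{K/k})^{-2s}$, so I would immediately conclude
$$\res_{s=1}\Phi(s) \;=\; \sum_{K\in\K_H}\frac{R(K)}{\Norm(d_{K/k})^{2}}.$$

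The only point that really needs checking is the absolute convergence of this series. For this I would combine Lemma~\ref{residuum}, which gives $\res_{s=1}\zeta_K(s)=O_{[K:\Q],\eps}(d_K^\eps)$ and hence (via $d_K=d_k^{[K:k]}\Norm(d_{K/k})$) the bound $R(K)=O_{k,H,\eps}(\Norm(d_{K/k})^\eps)$, with the absolute convergence of the Dirichlet series \eqref{eq:d_K} on $\Re(s)>1$. This makes each summand $O_{k,H,\eps}(\Norm(d_{K/k})^{-2+\eps})$, and summability follows.

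Finally, inserting the explicit formula $R(K)=2^{-i(K)}\res_{s=1}\zeta_K(s)/\zeta_K(2)$ from Theorem~\ref{bound_phi} (equivalently Corollary~\ref{phiZ2res}) into the displayed identity, and rewriting the denominator using the conductor--discriminant relation $d_K=d_k^{[K:k]}\Norm(d_{K/k})$, yields the asserted closed form. There is no real obstacle here --- all the analytic work was done inside Theorem~\ref{Satz:kranz}, and this corollary is merely the extraction of the residue from the pole term already isolated there; the only mildly non-trivial ingredient is the a priori bound on $R(K)$ ensuring the sum makes sense term by term.
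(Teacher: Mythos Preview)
Your approach is exactly what the paper (implicitly) does: the corollary is stated there without a separate proof, and it simply reads off the residue from the decomposition $\Phi(s)=g(s)+\tfrac{1}{s-1}\sum_{K}R(K)\,\Norm(d_{K/k})^{-2s}$ obtained inside Theorem~\ref{Satz:kranz}, together with the explicit value of $R(K)$ from Theorem~\ref{bound_phi}. One caveat on your final cosmetic step: the relation $d_K=d_k^{[K:k]}\Norm(d_{K/k})$ turns $\Norm(d_{K/k})^{2}$ into $d_K^{2}/d_k^{2[K:k]}$, not $d_K^{2}$, so the displayed identity as printed is off by the fixed constant $d_k^{2[K:k]}$ unless $k=\Q$; this is a slip in the paper's stated formula rather than in your argument.
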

These results support our main conjecture.
\begin{corollary}\label{Cor1}
  Conjecture \ref{con} is true for all 
  $C_2\wr H$ and all number fields  $k$ such that $H$ fulfills the assumptions of Theorem
  \ref{Satz:kranz}.
\end{corollary}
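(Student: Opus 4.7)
The plan is to unwind the statement of Malle's conjecture for $G = C_2 \wr H$ and then read off the result from the preceding machinery. First, I would compute the group-theoretic invariants. Applying the lemma in Section \ref{sec:wr} with $H_1 = C_2$ and $H_2 = H$ gives $a(G) = a(C_2)$ and $b(k,G) = b(k,C_2)$. Since the nontrivial element of $C_2$ is a transposition, we have $\ind(C_2) = 1$, so $a(C_2) = 1$, and there is a single conjugacy class of minimal index, so $b(k,C_2) = 1$. Thus Malle's Conjecture \ref{con} for $G$ reduces to the prediction
$$Z(k,G;x) \sim c(k,G)\, x^{1}\log(x)^{0} = c(k,G)\, x \qquad \text{with } c(k,G) > 0.$$

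Second, I would invoke Theorem \ref{mainwreath}, which under the assumptions imposed on $H$ yields precisely
$$Z(k, C_2 \wr H; x) \sim \res_{s=1}(\Phi(s)) \cdot x.$$
It then only remains to check positivity of the residue. Here I would appeal to the explicit formula from the corollary immediately preceding the statement:
$$\res_{s=1}(\Phi(s)) = \sum_{K \in \K_H} \frac{\res_{s=1}\zeta_K(s)}{2^{i(K)} d_K^{2}\,\zeta_K(2)}.$$
Every summand is strictly positive since $\res_{s=1}\zeta_K(s) > 0$ and $\zeta_K(2) > 0$ for every number field $K$. The hypothesis built into Theorem \ref{Satz:kranz} that at least one extension of $k$ with Galois group $H$ exists guarantees that the index set $\K_H$ is nonempty, so the sum is a nonempty sum of positive reals and hence strictly positive.

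The main obstacle for this corollary in isolation is essentially nil: once Theorems \ref{Satz:kranz} and \ref{mainwreath} have been established, it is a bookkeeping exercise to match the asymptotic with Malle's prediction. The genuine difficulty lies upstream, in the analytic continuation of $\Phi(s)$ past $\Re(s) = 1$ using the Hecke $L$-series bounds of Corollary \ref{korbound}, and in the separation of proper subgroup realizations via the estimate on $Y(k, C_2 \wr H; x)$. For the corollary itself, the only point that requires even a moment of thought is isolating a valid $c(k,G)$, which is handled by the positivity argument above.
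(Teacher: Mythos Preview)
Your proposal is correct and follows exactly the route the paper intends: the corollary is stated without proof as an immediate consequence of Theorem \ref{mainwreath} together with the computation $a(C_2\wr H)=1$, $b(k,C_2\wr H)=1$ from the lemma in Section \ref{sec:wr}. You have in fact supplied more detail than the paper does, notably the explicit positivity check for $\res_{s=1}\Phi(s)$ via the summand formula and the nonemptiness of $\K_H$.
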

We are already remarked that this assumption is true for all $p$--groups and all 
permutation groups in regular representation. Therefore we get the following
corollary.

\begin{corollary}\label{Cor2}
  For even $n$ there always exists a group $G\leq S_n$ with $a(G)=1$ and 
  $$Z(k,G;x) \sim c(k,G) x =c(k,G)x^{a(G)}.$$
\end{corollary}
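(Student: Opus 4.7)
The plan is to reduce the statement to Corollary \ref{Cor1} by exhibiting, for each even $n=2d$, a transitive subgroup $H\leq S_d$ satisfying the hypotheses of Theorem \ref{Satz:kranz}. The wreath product $G:=C_2\wr H\leq S_{2d}=S_n$ will then inherit $a(G)=a(C_2)=1$ and $b(k,G)=b(k,C_2)=1$ from the lemma in Section \ref{sec:wr}, so the asymptotic $c(k,G)x^{a(G)}\log(x)^{b(k,G)-1}$ predicted by Conjecture \ref{con} collapses to $c(k,G)\,x$, which is exactly what the corollary demands.

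For $d=1$ (so $n=2$) the claim is just the classical asymptotic for quadratic extensions from Corollary \ref{phiZ2res} with $G=C_2$; no wreath product is needed. For $d\geq 2$ I would take $H$ to be the regular representation of the cyclic group $C_d$ on itself, which is a transitive subgroup of $S_d$. Two conditions must then be checked in order to apply Corollary \ref{Cor1}: first, that there exists at least one extension of $k$ with Galois group $H$, which is immediate from class field theory since cyclic extensions of $k$ of every degree exist; and second, that $Z(k,H;x)=O_{k,H,\eps}(x^{1+\eps})$, which is exactly the bound for regular permutation groups from \cite{ElVe} cited in the remark preceding Theorem \ref{Satz:kranz}.

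Once these two verifications are in place the corollary follows directly from Corollary \ref{Cor1}. The argument is essentially a packaging of results already proved in the paper, and I do not foresee any genuine obstacle: the only content is the observation that the regular representation of $C_d$ supplies a valid $H$ in every degree, which guarantees the existence of a suitable $G\le S_n$ for every even $n$.
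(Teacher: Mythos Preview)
Your proposal is correct and takes essentially the same approach as the paper: the paper simply remarks that the hypothesis of Theorem~\ref{Satz:kranz} holds for all $p$-groups and for all groups in regular representation, and deduces the corollary immediately from Corollary~\ref{Cor1}. Your choice $H=C_d$ in its regular representation is a clean explicit instance of this, and your separate treatment of $d=1$ via Corollary~\ref{phiZ2res} is a sensible way to avoid the degenerate case $H=\{1\}$.
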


\section*{Acknowledgments}
I would like to thank Gunter Malle for many discussions about this topic. This project was partially supported
by the Deutsche Forschungsgemeinschaft (DFG).
\bibliographystyle{abbrv} 
\bibliography{myref} 

\begin{thebibliography}{10}

\bibitem{Bh1}
M.~Bhargava.
\newblock The density of discriminants of quartic rings and fields.
\newblock {\em Ann. of Math. (2)}, 162(2):1031--1063, 2005.

\bibitem{Bh2}
M.~Bhargava.
\newblock The density of discriminants of quintic rings and fields.
\newblock {\em Ann. of Math. (2)}, 172(3):1559--1591, 2010.

\bibitem{CoDiOl2}
H.~Cohen, F.~Diaz~y Diaz, and M.~Olivier.
\newblock Enumerating quartic dihedral extensions of {$\mathbb Q$}.
\newblock {\em Compositio Math.}, 133(1):65--93, 2002.

\bibitem{DaHe}
H.~Davenport and H.~Heilbronn.
\newblock On the density of discriminants of cubic fields. {II}.
\newblock {\em Proc. Roy. Soc. London Ser. A}, 322(1551):405--420, 1971.

\bibitem{DiMo}
J.~Dixon and B.~Mortimer.
\newblock {\em Permutation groups}.
\newblock Springer, Berlin-Heidelberg-New York, 1996.

\bibitem{ElVe}
J.~Ellenberg and A.~Venkatesh.
\newblock The number of extensions of a number field with fixed degree and
  bounded discriminant.
\newblock {\em Ann. of Math.}, 163:723--741, 2006.

\bibitem{HePaPo}
F.~Hess, S.~Pauli, and M.~E. Pohst.
\newblock Computing the multiplicative group of residue class rings.
\newblock {\em Math. Comput.}, 72(243):1531--1548, 2003.

\bibitem{IwKo}
H.~Iwaniec and E.~Kowalski.
\newblock {\em Analytic Number Theory}, volume~53 of {\em Colloquium
  Publications}.
\newblock American Mathematical Society, 2004.

\bibitem{Kl5}
J.~Kl{\"u}ners.
\newblock A counterexample to {M}alle's conjecture on the asymptotics of
  discriminants.
\newblock {\em C. R. Math. Acad. Sci. Paris}, 340(6):411--414, 2005.

\bibitem{KlMa2}
J.~Kl{\"u}ners and G.~Malle.
\newblock Counting nilpotent {G}alois extensions.
\newblock {\em J. Reine Angew. Math.}, 572:1--26, 2004.

\bibitem{KraKal}
M.~Krasner and L.~Kaloujnine.
\newblock Produit complet des groupes de permutation et probl\`eme d'extension
  de groupes {II}.
\newblock {\em Acta Sci. Math. (Szeged)}, 14:39--66, 1951.

\bibitem{Lang}
S.~Lang.
\newblock {\em Algebraic Number Theory}.
\newblock Springer, Berlin-Heidelberg-New York, 1986.

\bibitem{Ma4}
G.~Malle.
\newblock On the distribution of {G}alois groups.
\newblock {\em J. Numb. Theory}, 92:315--322, 2002.

\bibitem{Ma5}
G.~Malle.
\newblock On the distribution of {G}alois groups {II}.
\newblock {\em Exp. Math.}, 13:129--135, 2004.

\bibitem{Nar2}
W.~Narkiewicz.
\newblock {\em Number Theory}.
\newblock World Scientific, 1983.

\bibitem{Nar}
W.~Narkiewicz.
\newblock {\em Elementary and Analytic Theory of Algebraic Numbers}.
\newblock Springer, 1989.

\end{thebibliography}
\end{document}